\let\amsthmproof\proof
\let\amsthmqed\qed
\let\amsthmqedsymbol\qedsymbol
\let\proof\amsthmproof
\let\qed\amsthmqed
\let\qedsymbol\amsthmqedsymbol
\newcommand{\tab}{\hspace{1em}}
\newtheorem {asp}{Assumption}[section]
\newtheorem{lm}{Lemma}[section]
\newtheorem{prop}{Proposition}[section]
\newtheorem{deff}{Definition}
\newtheorem{thm}{Theorem}
\newtheorem{rem}{Remark}
\newtheorem{exam}{Example}[section]
\numberwithin{equation}{section}
\DeclareMathOperator{\dist}{dist}
\newcommand{\eps}{\varepsilon}
\newcommand{\M}{\mathcal{S}}
\newcommand{\cN}{\mathcal{N}}
\newcommand{\cM}{\mathcal{M}}
\newcommand{\E}{\mathbb{E}}
\newcommand{\PP}{\mathbb{P}}
\DeclareMathOperator{\Conv}{Conv}
\newcommand{\N}{{\mathbb{Z}}_+}
\newcommand{\Z}{\mathbb{Z}}
\newcommand{\K}{\mathcal{K}}
\newcommand{\R}{\mathbb{R}}
\numberwithin{equation}{section}
\newcommand{\1}{\boldsymbol{1}}
\newcommand{\0}{\boldsymbol{0}}
\newcommand{\wtd}{\widetilde}
\newcommand{\BX}{\mathbf{X}}
\newcommand{\bx}{\mathbf{x}}
\newcommand{\bp}{\mathbf{p}}
\newcommand{\by}{\mathbf{y}}
\newcommand{\wdt}{\widetilde}
\newcommand{\Lom}{{\mathcal L}}
\newcommand{\bed}{\begin{equation}}
\newcommand{\eed}{\end{equation}}
\newcommand{\bea}{\bed\begin{array}{rl}}
	\newcommand{\eea}{\end{array}\eed}
\newcommand{\barray}{\begin{array}{ll}}
	\newcommand{\earray}{\end{array}}
\def\bar{\overline}
\def\hat{\widehat}
\def\a.s{\text{\;a.s.\;}}
\def\supp{\text{supp\,}}
\def\bdelta{\boldsymbol{\delta}}
\title{Random switching in an ecosystem with two prey and one predator}
\author[A. Hening]{Alexandru Hening }
\address{Department of Mathematics\\
Texas A\&M University\\
Mailstop 3368\\
College Station, TX 77843-3368\\
United States
}
\email{ahening@tamu.edu}
\author[D.H. Nguyen]{Dang H. Nguyen }
\address{Department of Mathematics \\
University of Alabama\\
345 Gordon Palmer Hall\\
Box 870350 \\
Tuscaloosa, AL 35487-0350 \\
United States}
\email{dangnh.maths@gmail.com}
\author[N. Nguyen]{ Nhu Nguyen }
\address{Department of Mathematics \\
University of Connecticut\\
341 Mansfield Road U1009\\
Storrs, Connecticut 06269-1009\\
United States}
\email{nguyen.nhu@uconn.edu }
\author[H. Watts]{Harrison Watts }
\address{Department of Mathematics \\
University of Alabama\\
345 Gordon Palmer Hall\\
Box 870350 \\
Tuscaloosa, AL 35487-0350 \\
United States}
\email{hrwatts@crimson.ua.edu}
\begin{document}
\begin{abstract}

In this paper we study the long term dynamics of two prey species and one predator species. In the deterministic setting, if we assume the interactions are of Lotka-Volterra type (competition or predation), the long term behavior of this system is well known. However, nature is usually not deterministic. All ecosystems experience some type of random environmental fluctuations. We incorporate these into a natural framework as follows. Suppose the environment has two possible states. In each of the two environmental states the dynamics is governed by a system of Lotka-Volterra ODE. The randomness comes from spending an exponential amount of time in each environmental state and then switching to the other one. We show how this random switching can create very interesting phenomena. In some cases the randomness can facilitate the coexistence of the three species even though coexistence is impossible in each of the two environmental states. In other cases, even though there is coexistence in each of the two environmental states, switching can lead to the loss of one or more species. We look into how predators and environmental fluctuations can mediate coexistence among competing species.

\end{abstract}

\maketitle
	\section{Introduction}
An important question in ecology is the relationship between complexity and stability. In particular, ecologists have been interested in whether predators can help facilitate coexistence or whether they are always detrimental to species diversity. Since the important work by \cite{P66} it has been clear that predators play a fundamental role in species diversity. There are experimental studies which show that the removal of predators can lead to the extinctions of various species. Other studies have shown the opposite effect, namely, that introducing a predator does not help mediate coexistence or that the addition of the predator leads to fewer species coexisting. In this paper we are interested in exploring these phenomena in the setting of Lotka-Volterra (LV) dynamics. The dynamics of two competing species is well-known in this setting - it can lead to coexistence where both species persist, competitive exclusion where one species is dominant and drives the other one extinct or to bistability where, depending on the initial conditions, one species persists and one goes extinct. There have been numerous studies which looked at how the introduction of a predator changes the long term outcome of two competitors, see work by \cite{HV83, TA83, S97}.

Every natural system experiences unpredictable environmental fluctuations. In the ecological setting, these environmental fluctuations will change the way species grow, die, and interact with each other. It is therefore key to include environmental fluctuations in the mathematical framework when trying to determine species richness. Sometimes the deterministic dynamics can predict certain species going extinct. However, if one adds the effects of a random environment extinction might be reversed into coexistence. In other cases deterministic systems that coexist become extinct once one takes into account the random environmental fluctuations. One of fruitful way of introducing randomness is by modelling the populations as discrete or continuous time Markov processes and analyzing the long-term behavior of these processes \citep{C82, CE89, C00, ERSS13, EHS15,  LES03, SLS09, SBA11, BS09, BHS08, B18, HNC20}.

There are many ways in which one can model the environmental fluctuations that affect an ecological system. One way is by going from ordinary differential equations (ODE) to stochastic differential equations (SDE). This amounts to saying that the various birth, death and interaction rates in an ecosystem are not constant, but fluctuate around their average values according to some white noise. There is now a well established general theory of coexistence and extinction for these systems \citep{SBA11,HN16,HNC20}. However, this way of modelling environmental fluctuations can sometimes seem artificial in an ecological setting. In certain ecosystems, it makes more sense to assume that when the environment changes, the dynamics also changes significantly. In a deterministic setting this can be modelled by periodic vector fields which can be interpreted to mimic seasonal fluctuations. In the random setting, these types of fluctations are captured by piecewise deterministic Markov processes (PDMP) - see the work by \cite{D84} for an introduction to these stochastic processes. In a PDMP, the environment switches between a fixed finite number of states to each of which we associate an ODE. In each state the dynamics is given by the flow of its associated ODE. After a random time, the environment switches to a different state, and the dynamics is governed by the ODE from that state.

Recently there have been some important results for two-species ecosystems that showcased how the switching behavior of PDMP can create novel ecological phenomena. The first set of results is for a two-species competitive LV model. In \cite{BL16, HN20} the authors show that the random switching between two environments that are both favorable to the same species, e.g. the favored species is dominant and persists and the unfavored species goes extinct, can lead to the extinction of this favored species and the persistence of the unfavored species, to the coexistence of the two competing species, or to bistability. This is extremely interesting as it relates to the competitive exclusion principle \citep{V28, H60, L70}, a fundamental principle of ecology, which says in its simplest form that when multiple species compete with each other for the same resource, one competitor will win and drive all the others to extinction. Nevertheless, it has been observed in nature that multiple species can coexist despite limited resources.  \cite{H61} gave a possible explanation by arguing that variations of the environment can keep species away from the deterministic equilibria that are forecasted by the competitive exclusion principle. The PDMP example from \cite{BL16, HN20} shows how the switching can save species from extinction, even though in each fixed environment, the same species is dominant. The second result looks at the classical predator-prey LV model. In \cite{HS17} the authors study a system that switches randomly between two deterministic classical Lotka-Volterra predator-prey systems. Even though for each deterministic predator-prey system the predator and the prey densities form closed periodic orbits, it is shown in \cite{HS17} that the switching makes the system leave any compact set. Moreover, in the switched system, the predator and prey densities oscillate between $0$ and $\infty$. These two sets of results show that random switching can radically change the dynamics of the system, and create new, possibly unexpected, long term results.

For three-species LV systems, the classification of the dynamics is incomplete in the deterministic setting. In the setting of SDE an almost complete classification appears in \cite{HNS21}. Not much is known for the dynamics of three-species systems in the PDMP setting. We hope that this paper will provide valuable results both phenomenologically, by showcasing some counterintuitive results, and mathematically, by developing new tools for the analysis of the ergodic properties of PDMP.

The deterministic dynamics are given by
\begin{equation}\label{2prey}
\begin{aligned}
\frac{d X_1}{dt}(t)&=X_1(t)[r-X_1(t)-b_1 X_2(t)-c_1 X_3(t)],\\
	\frac{d X_2}{dt}(t)&=X_2(t)[r-X_2(t)-b_2 X_1(t)-c_2 X_3(t)],\\
	\frac{d X_3}{dt}(t)&=  X_3(t)[e_1X_1(t)+e_2X_2(t)-d].
\end{aligned}
\end{equation}
Here $X_1(t), X_2(t)$ are the densities of the two prey species at time $t\geq 0$ while $X_3(t)$ is the density of the generalist predator at time $t\geq 0$. For simplicity we assume that the per-capita growth rates of both prey species are equal and given by $r>0$ and that the per-capita intraspecies competition are both equal to $1$. The per-capita interspecies competition rate of species $j$ on species $i$ is given by $b_i>0$ where $i,j\in\{1,2\}.$ The predator dies, when there is no prey, at the per-capita rate $d>0$, the predation rates on species $1$ and $2$ are given by $c_1, c_2>0$ and the quantities $e_1, e_2>0$ measure how efficient the predator is at using up the predated species.
We will sometimes write \eqref{2prey} in the more compact form
\begin{equation}\label{2prey_short}
\frac{dX_i}{dt}(t)=X_i(t)f_i(\BX(t)), i=1, 2, 3,
\end{equation}
where $\BX := (X_1, X_2, X_3), f_1(\bx):= r-x_1-b_1 x_2-c_1 x_3, f_2(\bx)= r-x_2-b_2 x_1-c_2 x_3, f_3(\bx)=e_1x_1+e_2x_2-d$.
In the absence of the predator ($X_3=0$) if we have
\begin{equation}\label{e:2d_co1}
b_1<1, b_2<1
\end{equation}
then the coexistence of $(X_1, X_2)$ is impossible (except for a stable manifold of dimension 1)- one species will go extinct \citep{TA83,  S97}.
However,  if one assumes additionally that
\begin{equation}\label{e:2d_co2}
\begin{aligned}
e_1r&>d, \\
e_2r&>d,\\
r-\frac{d}{e_1}b_2-\left(r-\frac{d}{e_1}\right)\frac{c_2}{c_1}&>0,\\
r-\frac{d}{e_2}b_1-\left(r-\frac{d}{e_2}\right)\frac{c_1}{c_2}&>0
\end{aligned}
\end{equation}
then the three species will coexist \citep{TA83,  S97}. This shows that it is possible for the predator to mediate coexistence in this setting.

We next explain how the switching is introduced. We assume there are a two environmental states $\M:=\{1,2\}$. We note that our theoretical analysis works for any finite number of environmental states. The environmental state at time $t\geq 0$ will be given by $\xi(t)\in \M$.  We suppose that the coefficients $c_1, c_2, e_1, e_2$ which capture the interaction between the predator and the two prey species, are different in the two environmental states. As a result we will have coefficients $c_1(j), c_2(j), e_1(j), e_2(j)$ if the environment is in state $j$.

The dynamics becomes
\begin{equation}\label{2prey_pdmp}
\frac{dX_i}{dt}(t)=X_i(t)f_i(\BX(t),\xi(t)), i=1, 2, 3,
\end{equation}
where $f_1(\bx,j):= r-x_1-b_1 x_2-c_1(j) x_3, f_2(\bx,j)= r-x_2-b_2 x_1-c_2(j) x_3, f_3(\bx,j)=e_1(j)x_1+e_2(j)x_2-d$. We assume that $\xi(t)$ is an irreducible continuous time Markov chain that switches from state 1 to 2 at rate  $q_{12}$ and from state 2 to 1 at rate $q_{21}$:
\begin{equation}\label{e:tran}
 \PP\{\xi(t+\Delta)=j~|~\xi(t)=i, \xi(s), s\leq t\}=q_{ij}\Delta+o(\Delta) \text{ if } i\ne j.
\end{equation}
In this setting, the process spends an exponential random time, whose rate can be determined as a function of $q_{12}, q_{21}$, in one environment, after which it switches to the other environment, spends an exponential time there, then switches, and so on. Since $\xi(t)$ is an irreducible Markov chain it will have a unique invariant distribution  on $\M$ given by $$\pi=(\pi_1,\pi_2) =\left(\frac{q_{21}}{q_{12}+q_{21}},\frac{q_{12}}{q_{12}+q_{21}} \right).$$

\subsection {Mathematical setup.} It is well-known that a process $(\BX(t), \xi(t))$ satisfying \eqref{2prey_pdmp} and \eqref{e:tran} is a Markov process with generator acting on functions $G:\R_+^3\times\M\mapsto \R_+^3$ that are continuously differentiable in $\bx$ for each $k\in\M$ as
\begin{equation}
\Lom G(\bx, k)=\sum_{i=1}^3 x_if_i(\bx,k)\frac{\partial G}{\partial x_i}(\bx,k)+\sum_{l\in\M}q_{kl}G(\bx,l).
\end{equation}

We use the norm $\|\bx\|=\sum_{i=1}^3 |x_i|$ in $\R^3$. For $a,b\in\R$, let $a\wedge b:=\min\{a,b\}$ and $a\vee b:=\max\{a,b\}$. Similarly we let $\bigwedge_{i=1}^3 u_i:=\min_{i}u_i$ and  $\bigvee_{i=1}^3 u_i:=\max_{i}u_i$.

The quantity $\PP_{\bx,k}(A)$ will denote the probability of event $A$ if $(\BX(0), \xi(0))=(\bx,k)$. Call $\mu$ an invariant measure for the process $\BX$ if $\mu(\cdot,\cdot)$ is a measure such that for any $k\in \M$ one has that $\mu(\cdot,k)$ is a Borel probability measure on $\R_+^3$ and, if one starts the process with initial conditions distributed according to $\mu(\cdot,\cdot)$, then for any time $t\geq 0$ the distribution of $(\BX(t), \xi(t))$ is given by $\mu(\cdot,\cdot)$.

Let $\Conv\cM$ denote the set of invariant measures of $(\BX(t),\xi(t))$ whose support is contained in $\partial\R^3_+\times\M$. The set of extreme points of $\Conv\cM$, denoted by $\cM$, is the set of ergodic invariant measures with support on the boundary $\partial\R^3_+\times\M$.

We next define what we mean by persistence in our setting.
\begin{deff}
The process $\BX$ is strongly stochastically persistent if it has a unique invariant probability measure $\pi^*$ on $\R^{3,\circ}_+\times\M$ and
\begin{equation}
\lim\limits_{t\to\infty} \|P_\BX(t, \mathbf{x},k, \cdot)-\pi^*(\cdot)\|_{\text{TV}}=0, \;\mathbf{x}\in\R^{3,\circ}_+, k\in \cN
\end{equation}
where $\|\cdot,\cdot\|_{\text{TV}}$ is the total variation norm and $P_\BX(t,\mathbf{x}, k,\cdot)$ is the transition probability of $(\BX(t), \xi(t))$.
\end{deff}

If $\mu\in\M$ is an invariant measure and $\BX$ spends a lot of time close to its support, $\supp(\mu)$, then it will get attracted or repelled in the $i$th direction according to the \textit{Lyapunov exponent}, or invasion rate,
\begin{equation}\label{e:lambda}
\lambda_i(\mu)=\sum_{k\in\cN}\int_{\partial\R^n_+}f_i(\bx, k)\mu(d\bx,k).
\end{equation}
 The intuition comes from noting that $\frac{\ln X_i(t)}{t} = \frac{\ln X_i(0)}{t}+ \frac{\int_0^t f_i(\BX(s), \xi(s))\,ds}{t}$ is approximated well by $\lambda_i(\mu)$ if $t$ is large and $\BX$ stays close to the support of $\mu$.

Piecewise deterministic Markov processes can be quite degenerate and proving that there exist unique invariant probability measures in certain subspaces is far from trivial - see \cite{B18}.

\section{Well-posedness and solutions on the boundary}
In this section we prove some preliminary results which will be useful later on.

\begin{thm}\label{existence_thm}
	For any $(\bx_0,j_0)\in\R^3_+\times\M$ there exists a unique solution $(\BX_t,\xi_t)_{t\geq 0}$ to \eqref{2prey_pdmp} with initial value $(\BX(0),\xi(0))=(\bx_0,j_0)$.
There exists a compact set $\K\subset\R^3_+$ such that every nonnegative solution of \eqref{2prey_pdmp} eventually enters $K$ and then remains there forever.
	Moreover, if $\BX(0)=\bx_0\in \R_+^{3,\circ}$ then with probability one $\BX(t)\in \R_+^{3,\circ}$ for all $t\geq0$.
\end{thm}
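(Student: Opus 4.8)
The plan is to establish the three assertions in turn: local existence and uniqueness, non-explosion together with a deterministic compact absorbing set, and invariance of the open positive octant.

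For the first, fix an environment $k\in\M$; the vector field $\bx\mapsto(x_1f_1(\bx,k),x_2f_2(\bx,k),x_3f_3(\bx,k))$ is polynomial, hence locally Lipschitz, so Picard--Lindel\"of gives a unique maximal solution of the ODE from any $\bx_0\in\R^3_+$. Since $\xi$ is an irreducible continuous-time Markov chain on the finite set $\M$, almost every path of $\xi$ has only finitely many jumps on each bounded interval; concatenating the ODE pieces and using continuity of $\BX$ at the (given $\xi$, deterministic) jump times yields a unique path $(\BX_t,\xi_t)$ up to a possible explosion time.

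To rule out explosion and at the same time produce $\K$, I would combine a comparison argument with a linear Lyapunov function. For $i=1,2$, nonnegativity of the coordinates gives $\dot X_i\le X_i(r-X_i)$, so comparison with the logistic equation yields $X_i(t)\le\max\{X_i(0),r\}$ for all $t$ and $\limsup_{t\to\infty}X_i(t)\le r$. Next set $a_1:=\max_{k\in\M}e_1(k)/c_1(k)$, $a_2:=\max_{k\in\M}e_2(k)/c_2(k)$ and $W:=a_1X_1+a_2X_2+X_3$, which does not depend on $\xi$ and is therefore continuous along the whole trajectory. A direct computation shows the coefficient of $X_iX_3$ in $\dot W$ equals $e_i(k)-a_ic_i(k)\le0$ for $i=1,2$ and every $k$; discarding these together with the nonpositive competition terms and using $u(r-u)\le r^2/4$ gives $\dot W\le\tfrac{(a_1+a_2)r^2}{4}-dX_3$. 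Writing $dX_3=dW-d(a_1X_1+a_2X_2)$ and inserting the prey bound $X_1,X_2\le r+\eps$ (valid for $t$ large) produces $\dot W\le C-dW$ with $C:=(a_1+a_2)\bigl(\tfrac{r^2}{4}+d(r+\eps)\bigr)$, so Gr\"onwall's inequality gives $\limsup_{t\to\infty}W(t)\le C/d$. Fixing $\eps>0$ and setting $\K:=\{\bx\in\R^3_+:x_1\le r+\eps,\ x_2\le r+\eps,\ a_1x_1+a_2x_2+x_3\le C/d+\eps\}$, the same estimates (now strict on the faces $x_1=r+\eps$, $x_2=r+\eps$ and $W=C/d+\eps$, together with invariance of the hyperplanes $x_i=0$) show that $\K$ is compact, forward invariant, and reached by every nonnegative solution in finite time; boundedness on bounded time intervals then excludes explosion, so the path above is global.

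Finally, for $\bx_0\in\R_+^{3,\circ}$ each coordinate satisfies $X_i(t)=X_i(0)\exp\bigl(\int_0^tf_i(\BX(s),\xi(s))\,ds\bigr)$. On any interval $[0,T]$ the solution stays in a compact subset of $\R^3_+$ by the bounds just obtained, so $f_i$ is bounded there, the exponential is strictly positive, and hence $X_i(t)>0$ for all $i$ and all $t\ge0$, i.e. $\BX(t)\in\R_+^{3,\circ}$. I expect the one genuinely nonroutine point to be the construction of $W$: the weights $a_1,a_2$ must be chosen so that the cross terms $X_1X_3$ and $X_2X_3$ are killed simultaneously in both environments, which is precisely what makes the estimate uniform in $k$ and lets Gr\"onwall be applied across the switching times; the local existence, the logistic comparison, and the interior invariance are standard.
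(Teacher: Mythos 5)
Your proposal is correct and follows essentially the same route as the paper: local existence/uniqueness from the locally Lipschitz piecewise-deterministic structure, the logistic comparison $\dot X_i\le X_i(r-X_i)$ for the prey, a weighted linear functional whose predation/conversion cross terms are nonpositive in every environment yielding a linear differential inequality and an absorbing compact set, and interior invariance via the exponential representation of each coordinate. Your weights $a_1,a_2$ with $a_ic_i(k)\ge e_i(k)$ are just a renormalization of the paper's choice $W=X_1+X_2+\hat\eps X_3$ with $\hat\eps$ small enough that $c_i(k)-e_i(k)\hat\eps\ge0$, so the arguments coincide.
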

\begin{proof}
	
	Because the coefficients of \eqref{2prey_pdmp} are locally Lispchitz, for each initial value, there exists uniquely a local solution to \eqref{2prey_pdmp} (up to a possible explosion time).
	If the initial value is positive, it is clear that the solution will remain positive up to the exposion time because we can write
	\begin{equation}
		\begin{aligned}
			X_1(t)&= e^{\int_0^t (r-X_1(s)-b_1X_2(s)-c_1(\xi_s)X_3(s))ds}\\
			X_2(t)&= e^{\int_0^t (r-X_2(s)-b_2X_1(s)-c_2(\xi_s)X_3(s))ds}\\
			X_3(t)&= e^{\int_0^t (e_1(\xi_s)X_1(s)+e_2(\xi_s)X_2(s)-d)ds}.
		\end{aligned}
	\end{equation}
	On the other hand, it is clear that any solution with nonnegative initial value cannot blow up in a finite time.
	Since
\[
\frac{dX_1}{dt}(t)\leq X_1(t)(r-X_1(s)),
\]
it is clear that if $X_1(0)\geq 0$ then $X_1(t)$ is finite for any $t$. Moreover, eventually, we have $X_1(t)\leq r$. The same conclusion holds true for $X_2(t)$.
	
Note that
\[
\frac{dX_3}{dt}(t)=X_3(t)[e_1(\xi_t)X_1(t)+e_2(\xi_t)X_2(t)-d].
\]
Since we already have shown that $X_1(t), X_2(t)$ are bounded, it is clear from the above that $X_3(t)$ is finite for all $t$.
	
Finally, take $\hat\eps>0$ be sufficiently small such that for all  $i\in\M$ we have
		\begin{equation*}
		\begin{aligned}
			c_1(i)-e_1(i)\hat\eps &\geq 0\\
            c_2(i)-e_2(i)\hat\eps &\leq 0.
		\end{aligned}
	\end{equation*}
	
	From \eqref{2prey_pdmp}, we have for $W_t:=X_1(t)+X_2(t)+\hat\eps X_3(t)$ that
	\begin{align*}
		\frac{dW_t}{dt}\leq& r (X_1(t)+X_2(t)) - X_1(t)^2-X_2(t)^2 -d\hat\eps X_3(t)\\
		\leq &
		(r+d\hat\eps )(X_1(t)+X_2(t))-(X_1(t)^2+X_2(t)^2)- d\hat\eps W_t\\
		\leq & \hat R-d\hat\eps W_t\
	\end{align*}
	for some $\hat R>0$.
	From this equation, it is easy to show that, eventually, we have
	$W_t\leq \frac{\hat R}{d\hat\eps}$
	and if $W_0 \leq \frac{\hat R}{d\hat\eps}$, then $W_t\leq \frac{\hat R}{d\hat\eps}, t\geq0$.
As a result
\[
\left\{(x_1,x_2,x_3)\in\R^{3}_+: x_1+x_2+\hat\eps x_3\leq \frac{\hat R}{d\hat\eps}\right\}
\]
is an attractive invariant set for \eqref{2prey_pdmp}.
\end{proof}
The next assumption is enforced throughout the paper.
\begin{asp}\label{asp0}
The following conditions hold:
\begin{enumerate}
	\item $b_1<1, b_2<1$.
	\item $r\sum_{j\in\M}e_i(j)\pi_j>d; i=1,2.$
	\item $c_1(i)e_1(j) - c_1(j)e_1(i)\ne 0$ for some $i,j\in\M$.
		\item $c_2(i)e_2(j) - c_2(j)e_2(i)\ne 0$ for some $i,j\in\M$.
\end{enumerate}
\end{asp}
Let $\mu_1=\bdelta_{(r,0,0)}\times\M$ and $\mu_2=\bdelta_{(0,r,0)}\times\M$
where $\bdelta_{\bx}$ is the Dirac measure with mass at $\bx$.

Because $(r,0,0)$ and $(0,r,0)$ are equilibria on the axes $Ox_1$ and $Ox_2$ respectively, Assumption \ref{asp0} (2) implies that
$$
\lambda_3(\mu_i)=r\sum_{j\in\M}e_i(j)\pi_j-d>0; i=1,2.
$$
Then in view of \cite{B18} or \cite{du2014asymptotic}, there exists an invariant measure $\mu_{13}$ on $\R^{13,\circ}_+\times\M$ where $\R^{13,\circ}_+:=\{x_1>0, x_3>0, x_2=0\}$ (species $X_2$ is extinct in this subspace) and an invariant measure $\mu_{23}$ on $\R^{23,\circ}_+\times\M$ where $\R^{23,\circ}_+:=\{x_2>0, x_3>0, x_1=0\}$ (species $X_1$ is extinct in this subspace).

On $\R^{12,\circ}_+\times\M$, because $b_1<1,b_2<1$, the point $(x_-,y_-):=\left(\frac{r(1-b_1)}{1-b_1b_2},\frac{r(1-b_2)}{1-b_1b_2}\right)$ will be a saddle equilibrium for the deterministic system
\begin{equation}\label{2comp}
\begin{aligned}
\frac{d X_1}{dt}(t)&=X_1(t)[r-X_1(t)-b_1 X_2(t)]\\
	\frac{d X_2}{dt}(t)&=X_2(t)[r-X_2(t)-b_2 X_1(t)].
\end{aligned}
\end{equation}
Since the coefficients $r, b_1, b_2$ are not influenced by the random switching, the process $\BX$ is fully degenerate and deterministic on $\R^{12,\circ}_+$. As a result, if we let $\bdelta_{x_-,y-}$ be the Dirac measure at $(x_-,y_-)$, then
\begin{equation}\label{e:mu12}
\mu_{12}:=\bdelta_{x_-,y-}\times\pi
\end{equation}
 is the unique invariant probability measure of the process $(\BX,\xi)$ from \eqref{2prey_pdmp} on $\R^{12,\circ}_+\times \M$.

To proceed, we recall the concept of the bracket condition which is an analogue of H\"{o}rmander's condition for hypoelliptic diffusion operators \cite{bakhtin2012invariant}.
Let $[F, G]$ be the Lie bracket of two vector fields $F$ and $G$ and ${\mathcal{F}}_0$ the set of vector fields $\{F_\ell:\ell\in{\M}\}$.
For $k=1,2,...,$ define ${\mathcal{F}}_k={\mathcal{F}}_{k-1}\cup \{[F_\ell,V]: \ell\in{\M}, V\in {\mathcal{F}}_{k-1}\}$, where ${\mathcal{F}}_k(x_1,y,z)$ is the vector space spanned by $\{V(x_1,x_2,x_3):V\in{\mathcal{F}}_k\}$.  Similarly, let ${\mathcal{G}}_0$ = $\{F_\ell- F_m: \ell \neq m\in{\M}\}$ and ${\mathcal{G}}_k={\mathcal{G}}_{k-1}\cup \{[F_\ell,V]: \ell\in{\M}, V\in {\mathcal{G}}_{k-1} \}$. Then we say the weak (strong) bracket condition
is satisfied at  $(x_1,x_2,x_3) \in \R^3$ if there exists $k\geq 0$ such that ${\mathcal{F}}_k(x_1,x_2,x_3) = {\mathbb{R}}^3$ (${\mathcal{G}}_k(x_1,x_2,x_3) = {\mathbb{R}}^3$).

If the weak bracket condition is satisfied at some $x\in\Gamma(K)$, then by \cite[Theorem 1]{bakhtin2012invariant} the invariant measure of the semigroup $(P^{\langle t\rangle})$ is unique and absolutely continuous with respect to the product of the Lebesgue measure on $K$ and the discrete measure on $\M.$
If the strong bracket condition is satisfied at some $x\in\Gamma(K)$, then by  \cite[Theorem 4.5]{benaim2015qualitative} the Markov Chain $(\textbf{Y}_n)$ is irreducible and aperiodic, and every compact subset of $\R^3_+\times\M$ is petite.

We can easily have similar conditions for the absolute continuity of an invariant measure in a subspace of $\R^3_+$.

We next prove that the invariant measures of $(X_1, X_3)$, and mutatis mutandis for $(X_2, X_3)$, are unique.
\begin{thm} \label{t1}
There exist unique invariant measures $\mu_{13}$ and $\mu_{23}$ on $\R^{13,\circ}_+$ and $\R^{23,\circ}_+$ respectively.
\end{thm}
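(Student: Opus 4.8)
I will work on the invariant subspace $\R^{13,\circ}_+=\{x_1>0,\ x_2=0,\ x_3>0\}$, on which \eqref{2prey_pdmp} reduces to a two-dimensional PDMP switching between the vector fields $F_j(x_1,x_3)=\bigl(x_1(r-x_1-c_1(j)x_3),\ x_3(e_1(j)x_1-d)\bigr)$, $j\in\M$, i.e.\ a randomly switched Lotka--Volterra predator--prey system with prey self-regulation. By Theorem \ref{existence_thm} (restricted to $\R^{13,\circ}_+$), every solution issued from $\R^{13,\circ}_+$ stays there and eventually enters a fixed compact set, so an invariant probability measure on $\R^{13,\circ}_+\times\M$ exists (as already recalled via \cite{B18,du2014asymptotic}); only uniqueness needs to be shown. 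For this I will use the criterion quoted above: it suffices to exhibit a point $p^\star\in\R^{13,\circ}_+$ that is accessible from every point of $\R^{13,\circ}_+$ and at which the weak bracket condition holds, and then apply \cite[Theorem 1]{bakhtin2012invariant}. The statement for $\mu_{23}$ follows verbatim, with $(c_1,e_1)$ and Assumption \ref{asp0}(3) replaced by $(c_2,e_2)$ and Assumption \ref{asp0}(4).

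For accessibility, note that Assumption \ref{asp0}(2) forces $r\,e_1(j^\star)>d$ for some $j^\star\in\M$. For the flow of $F_{j^\star}$ the point $E^\star=(x_1^\star,x_3^\star):=\bigl(d/e_1(j^\star),\ (r-d/e_1(j^\star))/c_1(j^\star)\bigr)$ is the unique interior equilibrium, and the classical Lyapunov function $V=e_1(j^\star)\bigl(x_1-x_1^\star\ln x_1\bigr)+c_1(j^\star)\bigl(x_3-x_3^\star\ln x_3\bigr)$ satisfies $\dot V=-e_1(j^\star)(x_1-x_1^\star)^2\le 0$, so by LaSalle's invariance principle $E^\star$ is globally asymptotically stable in $\R^{13,\circ}_+$ for this flow. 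Hence from any initial condition the process, by staying in state $j^\star$ long enough, can be driven arbitrarily close to $E^\star$; thus $E^\star$, and every point reachable from it by further switching, is accessible from all of $\R^{13,\circ}_+$.

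For the bracket condition, one computes $F_1-F_2=x_1x_3\,w$ with the \emph{constant} vector $w=(c_1(2)-c_1(1),\ e_1(1)-e_1(2))\ne 0$, the non-vanishing being precisely Assumption \ref{asp0}(3). Consequently the degeneracy locus $N:=\{(x_1,x_3)\in\R^{13,\circ}_+:F_1,F_2\ \text{are linearly dependent}\}$ is the zero set of the polynomial $P(x_1,x_3):=(e_1(2)-e_1(1))x_1(r-x_1)+x_3\bigl[(c_1(2)e_1(1)-c_1(1)e_1(2))x_1+(c_1(1)-c_1(2))d\bigr]$, whose coefficient of $x_1x_3$ equals $c_1(2)e_1(1)-c_1(1)e_1(2)\ne 0$; hence $P\not\equiv 0$ and $N$ has open, dense complement in $\R^{13,\circ}_+$. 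Since the accessible set is forward-invariant under both flows, it contains the $F_{j'}$-orbit of $E^\star$ for $j'\ne j^\star$; that orbit starts in $N$ (because $F_{j^\star}(E^\star)=0$) but leaves it at once, as $\nabla P(E^\star)\cdot F_{j'}(E^\star)\ne 0$ in the generic case, and when this quantity vanishes one passes instead to the Lie bracket $[F_{j^\star},F_{j'}](E^\star)$, a nonzero multiple of $DF_{j^\star}(E^\star)w$, invoking analyticity of $P$ to exclude the possibility that every iterated bracket stays parallel to $w$. This produces an accessible point $p^\star\notin N$ at which $\{F_1(p^\star),F_2(p^\star)\}$ spans $\R^2$, so the weak bracket condition holds there, and \cite[Theorem 1]{bakhtin2012invariant} finishes the argument.

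The main obstacle is the interplay of these last two steps: showing that the two deterministic flows can be concatenated so as to reach an open subset of $\R^{13,\circ}_+\setminus N$ (equivalently, that the accessible set has nonempty interior meeting the non-degeneracy region), and handling carefully the non-generic parameter configurations in which the first Lie bracket at $E^\star$ remains proportional to $w$. The borderline cases $r e_1(j)=d$, and predator extinction in one environment, need a short separate remark, but Assumption \ref{asp0}(2) always supplies at least one favourable environment, which is all the argument uses.
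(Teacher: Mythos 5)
You take a genuinely different route from the paper: you reduce to the two--dimensional switched predator--prey system on $\R^{13,\circ}_+$, prove accessibility of the interior equilibrium $E^\star$ of one environment via the classical Lyapunov function (that part is correct and nicely done), and then try to verify the \emph{weak} bracket condition at some accessible point so as to invoke \cite[Theorem 1]{bakhtin2012invariant}. The paper instead never argues accessibility of a particular point: it verifies the \emph{strong} bracket condition at every point of the open quadrant by an explicit computation with $G_0=F_1-F_2$, $G_1=[F_1,G_0]$, $G_2=[G_0,G_1]$, showing that $\det[G_0\ G_1]$ and $\det[G_0\ G_2]$ have no common zero in $\R^{13,\circ}_+$, and then cites \cite[Theorems 4.4 and 4.6]{benaim2015qualitative} to get uniqueness plus exponential convergence in total variation (strictly more than the statement asks).

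The problem is that the crux of your argument is left unproven, and you say so yourself. To apply the quoted criterion you must produce an accessible point at which the brackets span $\R^2$, and your argument for this is: the $F_{j'}$-orbit of $E^\star$ leaves the degeneracy curve $N=\{P=0\}$ ``at once'' because $\nabla P(E^\star)\cdot F_{j'}(E^\star)\ne 0$ ``in the generic case''; otherwise pass to $[F_{j^\star},F_{j'}](E^\star)=\pm DF_{j^\star}(E^\star)w$ and invoke ``analyticity of $P$''. Neither step is carried out, and the fallback is not merely unfinished but can genuinely fail as stated: since $F_{j^\star}(E^\star)=0$, every iterated bracket of the form $[F_{j^\star},[F_{j^\star},\dots,F_{j'}]]$ evaluated at $E^\star$ reduces to $\pm DF_{j^\star}(E^\star)^k\,F_{j'}(E^\star)$ with $F_{j'}(E^\star)$ parallel to $w$, so if $w$ happens to be an eigenvector of the Jacobian $DF_{j^\star}(E^\star)$ these all stay parallel to $w$, and ``analyticity of $P$'' does not exclude this; ruling it out requires either a concrete computation showing Assumption \ref{asp0}(3) (i.e. $C\ne 0$) forbids the eigenvector configuration, or brackets with $F_{j'}$ as the differentiating field, or the tangency/transversality analysis of the orbit against $N$ that you defer. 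In other words, the step you flag as ``the main obstacle'' is exactly the content of the paper's proof (its determinant computations), and without an analogous computation your proposal does not yet prove the theorem.
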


\begin{proof}
From Assumption \ref{asp0} (iii), we can assume without loss of generation that $c_1(1)e_1(2)-c_1(2)e_1(1)\ne0$. We show that $(X_1, X_3)$ satisfies the strong bracket condition (see \cite{benaim2015qualitative}).
Consider the vector fields
\[
G_0 =	F_1 -F_2 =
x_1x_3\left[
\begin{matrix}
	-A \\
	B
\end{matrix}
\right]
\]

and

\[
G_1 =	[F_1, G_0] =
x_1x_3\left[
\begin{matrix}
	Ad+(A-C)x_1\\
	Cx_3-Bx_1+Br
\end{matrix}
\right],
\]
where
$A=c_1(1) - c_1(2)$, $B= e_1(1) - e_1(2)$ and $C=c_1(1)e_1(2) - c_1(2)e_1(1).$
Then \[
G_2 =	[G_0,G_1] =
x_1x_3\left[
\begin{matrix}
	x_1(A(2C-A)x_3-BCx_1+AB(d+r)) \\
	x_3(B(2C+A)x_1-ACx_3-AB(d+r))
\end{matrix}
\right].
\]

The determinant of the matrix $\left[
\begin{matrix}
	G_0 & G_1
\end{matrix}
\right]$
is given by
\[
	x_1^2x_3^2\left(BCx_1-ABr-ABd-ACx_3\right).
\]
which vanishes when $x_1=\frac{A}{BC}(B(d+r)+Cx_3).$
Next the determinant of $\left[
\begin{matrix}
	G_0 & G_2
\end{matrix}
\right]$ is given by
\[
x_1^2x_3^2(B^2Cx_1^2+A^2Cx_3^2-(AB^2d+AB^2r)x_1+(A^2Bd+A^2Br)x_3-4ABCx_1x_3),
\]
which is zero when $$x_1=
\frac{A}{2BC}(B(d + r) + 4Cx_3 \pm \sqrt{D}),$$ where $$D=B^2(d + r)^2 + 4BC(d + r)x_3 + 12C^2x_3^2.$$
Then the strong bracket condition may be unsatisfied when $$2(B[d+r]+Cx_3)=B(d+r)+4Cx_3\pm \sqrt{D}. $$ This implies $$8Cx_3(B[d+r]+Cx_3)=0, $$ i.e., $x_3=-B(d+r)/C.$
This gives $\frac{A}{BC}(B(d+r)+Cx_3)=0.$
So the strong bracket condition is satisfied in $\R^{23,\circ}$.

Now, by  \cite[Theorem 4.4, Theorem 4.6]{benaim2015qualitative}, the probability measure $\PP_{(\bx_0,j_0)}[(\BX_t,\xi_t)\in\cdot\times\{j_0\}]$ is absolutely continuous with respect to Lebesgue measure on $\R^{13,\circ}_+$, and there exists a unique invariant probability measure $\mu_{13}$ on $\R^{13,\circ}_+\times\M$. In addition there are constants $c>1$ and $\alpha>0$ such that for any $t\geq 0, \bx\in \R^{13,\circ}_+, j\in \M$ we have $\|\PP_{(\bx,j)}[(\BX_t,\xi_t)\in\cdot]-\mu_{13}\|_{TV}\leq c e^{-\alpha t},$ so that the convergence is exponential.

\end{proof}

Now we present some auxiliary lemmas needed to obtain the main results.
\begin{lm}\label{lm3.2}
	$$\int_{\R^3_+}\sum_{j\in\M}\dfrac{x_1f_1(\bx,j)+x_2f_2(\bx,j)}{x_1+x_2}\pi(d\bx,j)=0, \pi\in\{\mu_1,\mu_2,\mu_{13},\mu_{23}\}.$$
\end{lm}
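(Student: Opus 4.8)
The plan is to observe that the integrand is precisely $\Lom G$ applied to $G(\bx,j):=\ln(x_1+x_2)$. Since this $G$ depends neither on $j$ nor on $x_3$, the generator gives
\begin{equation*}
\Lom G(\bx,j)=\frac{x_1 f_1(\bx,j)}{x_1+x_2}+\frac{x_2 f_2(\bx,j)}{x_1+x_2}=\frac{x_1f_1(\bx,j)+x_2f_2(\bx,j)}{x_1+x_2}.
\end{equation*}
So the lemma asserts that $\int\Lom G\,d\mu=0$ for these four invariant measures, which would be automatic if $G$ lay in the domain of $\Lom$; the only subtlety is that $G$ blows up where $x_1+x_2\to 0$. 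I would therefore exploit the explicit form of the supports and treat the four measures in two groups.

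For $\mu_1$ and $\mu_2$ there is nothing to integrate. On $\supp\mu_1=\{(r,0,0)\}\times\M$ one has $x_2=0$ and $f_1((r,0,0),j)=r-r=0$, so the numerator $x_1f_1+x_2f_2$ vanishes while the denominator equals $r>0$; hence the integrand is identically $0$ on the support and the integral is $0$. The identical computation at $(0,r,0)$ handles $\mu_2$.

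For $\mu_{13}$ — and for $\mu_{23}$ by the symmetric argument swapping $X_1\leftrightarrow X_2$ — note that on $\R^{13,\circ}_+\times\M$ we have $x_2=0$, so the integrand reduces to $f_1(\bx,j)$, which is bounded since every invariant measure is concentrated on the compact attractor $\K$ of Theorem \ref{existence_thm}. Thus the assertion is exactly $\lambda_1(\mu_{13})=0$, with $\lambda_1$ as in \eqref{e:lambda}. To prove this, I would use that by Theorem \ref{t1} $\mu_{13}$ is the unique invariant probability measure of the process on $\R^{13,\circ}_+\times\M$, hence ergodic. Along any trajectory in $\R^{13,\circ}_+$ one has $\ln X_1(t)-\ln X_1(0)=\int_0^t f_1(\BX(s),\xi(s))\,ds$, so Birkhoff's ergodic theorem applied to the bounded function $f_1$ gives, for $\mu_{13}$-a.e.\ $(\bx_0,j_0)$ and $\PP_{\bx_0,j_0}$-a.s.,
\begin{equation*}
\lim_{t\to\infty}\frac{\ln X_1(t)}{t}=\lim_{t\to\infty}\frac1t\int_0^t f_1(\BX(s),\xi(s))\,ds=\lambda_1(\mu_{13}).
\end{equation*}
Since $X_1(t)$ stays in $\K$ it is bounded above, which forces $\lambda_1(\mu_{13})\le 0$. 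If $\lambda_1(\mu_{13})<0$ then $X_1(t)\to 0$ as $t\to\infty$, $\PP_{\bx_0,j_0}$-a.s.\ for $\mu_{13}$-a.e.\ $(\bx_0,j_0)$, and then for each $\eps>0$ invariance together with dominated convergence gives
\begin{equation*}
\mu_{13}(\{x_1>\eps\}\times\M)=\int\PP_{\bx_0,j_0}\big(X_1(t)>\eps\big)\,\mu_{13}(d\bx_0,j_0),
\end{equation*}
and the right-hand side tends to $0$ as $t\to\infty$; hence $\mu_{13}(\{x_1>0\}\times\M)=0$, contradicting that $\mu_{13}$ lives on $\R^{13,\circ}_+\times\M$. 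Therefore $\lambda_1(\mu_{13})=0$, and likewise $\lambda_2(\mu_{23})=0$.

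The one non-mechanical step is this last identity $\lambda_1(\mu_{13})=0$ (and its analogue). An alternative would be to show $\ln x_1\in L^1(\mu_{13})$ and invoke $\int\Lom(\ln x_1)\,d\mu_{13}=0$ directly, but verifying the log-integrability near $\{x_1=0\}$ is itself work, so I would take the ergodic-theorem-plus-recurrence route above, which sidesteps it entirely.
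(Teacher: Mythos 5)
Your proposal is correct and follows essentially the same route as the paper, which simply invokes the contradiction/ergodic-theorem argument of \cite{HN16} (Lemmas 3.3 and 5.1): for $\mu_{13}$ and $\mu_{23}$ the integrand reduces on the support to $f_1$ (resp.\ $f_2$), so the claim is exactly the zero-Lyapunov-exponent property $\lambda_1(\mu_{13})=\lambda_2(\mu_{23})=0$ of Lemma \ref{l:lyapunov}, which you re-derive via Birkhoff plus the boundedness/escape-of-mass contradiction, while the Dirac measures $\mu_1,\mu_2$ are handled by the trivial direct evaluation. This is a sound, fully spelled-out version of the argument the paper leaves to the citation.
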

\begin{rem}
	Note that even though $\frac{1}{x_1+x_2}$ is undefined on the set $E_0:=\{(x_1,x_2,x_3)\in \R_+^3~|~x_1+x_2=0\}$ this does not matter since none of the measures $\{\mu_1,\mu_2,\mu_{13},\mu_{23}\}$ put any mass on the set $E_0$.
\end{rem}
\begin{proof}
	To prove the  lemma, one can use a contradiction argument similar to \cite[Lemma 3.3 and Lemma 5.1]{HN16}.
\end{proof}
\begin{lm}\label{l:lyapunov}
	For any ergodic measure $\mu\in\cM$ we have that $\lambda_i(\mu)$ is well defined and finite. Furthermore,
	\[
	\lambda_i(\mu)=0,~i\in I_\mu.
	\]
\end{lm}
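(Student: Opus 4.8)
The plan is to exploit the fact that each ergodic measure $\mu\in\cM$ is supported on the compact attracting set $\K$ (from Theorem \ref{existence_thm}) intersected with the boundary $\partial\R^3_+\times\M$, and that on such a compact set the growth rates $f_i(\bx,k)$ are bounded. Finiteness of $\lambda_i(\mu)=\sum_{k\in\cN}\int f_i(\bx,k)\,\mu(d\bx,k)$ then follows immediately from $|f_i|\le M$ on $\K$ for some constant $M$ together with $\mu$ being a probability measure: the integrand is bounded, hence integrable.

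For the vanishing statement $\lambda_i(\mu)=0$ when $i\in I_\mu$ (i.e.\ when the $i$th coordinate is positive on $\suppo\mu$), the idea is the standard ``$\frac1t\ln X_i(t)\to 0$'' argument made rigorous via Birkhoff's ergodic theorem applied to the flow $(\BX(t),\xi(t))$ under the stationary measure $\mu$. Concretely, I would start the process with $(\BX(0),\xi(0))\sim\mu$; since $i\in I_\mu$ we have $X_i(0)>0$ a.s., and by Theorem \ref{existence_thm} (positivity of solutions and invariance of $\K$) the coordinate $X_i(t)$ stays in a compact subset of $(0,\infty)$ for all $t\ge 0$, with $\mu$-a.s.\ bounds that are $\mu$-integrable. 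From \eqref{2prey_pdmp} we have the exact identity
\[
\frac{\ln X_i(t)-\ln X_i(0)}{t}=\frac1t\int_0^t f_i(\BX(s),\xi(s))\,ds .
\]
By the ergodic theorem for the stationary ergodic process $(\BX(s),\xi(s))$ (here I would invoke that $\mu$ is ergodic, so time averages of the bounded observable $f_i$ converge to the space average), the right-hand side converges $\mu$-a.s.\ to $\lambda_i(\mu)$ as $t\to\infty$. On the other hand, because both $\ln X_i(t)$ and $\ln X_i(0)$ are bounded (being logarithms of quantities confined to a compact subset of $(0,\infty)$), the left-hand side converges to $0$. Hence $\lambda_i(\mu)=0$.

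I expect the only delicate point to be the claim that $X_i(t)$ is $\mu$-a.s.\ bounded away from $0$ and from $\infty$ uniformly in $t$, so that $\ln X_i(t)$ really is bounded. The upper bound is handled by the compact attracting set $\K$ of Theorem \ref{existence_thm}. For the lower bound one uses stationarity: the function $\bx\mapsto \ln x_i$ may fail to be globally integrable against $\mu$ a priori, so I would instead argue as follows. Since $|f_i|\le M$ on $\K$, we get $e^{-Mt}X_i(0)\le X_i(t)\le e^{Mt}X_i(0)$, so $|\ln X_i(t)-\ln X_i(0)|\le Mt$, which already gives that $\frac1t(\ln X_i(t)-\ln X_i(0))$ is bounded by $M$ for every $t$; combined with the a.s.\ convergence of the Birkhoff average this shows $|\lambda_i(\mu)|\le M<\infty$ and that $\lambda_i(\mu)$ is the a.s.\ limit. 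To conclude it equals $0$, observe that if $\lambda_i(\mu)>0$ then $\ln X_i(t)\to+\infty$, contradicting the boundedness of $X_i$ on $\K$; if $\lambda_i(\mu)<0$ then $X_i(t)\to 0$, and a short argument (using that $\mu$ is invariant, so the law of $X_i(t)$ does not change with $t$, together with the contradiction that $X_i(t)\to 0$ in probability forces $\mu$ to put all its mass on $\{x_i=0\}$, contradicting $i\in I_\mu$) rules this out. This last step—ruling out the negative case cleanly from invariance—is the part I would write most carefully; alternatively one cites the analogous argument in \cite{HN16} or \cite{B18} verbatim.
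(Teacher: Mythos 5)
Your argument is correct and is essentially the paper's proof: the paper simply invokes \cite{HN16}[Lemma 5.1], whose argument is exactly your route (finiteness from boundedness of $f_i$ on the compact support in $\K$, and $\lambda_i(\mu)=0$ from the identity $\frac{1}{t}(\ln X_i(t)-\ln X_i(0))=\frac{1}{t}\int_0^t f_i\,ds$ plus Birkhoff's theorem and stationarity). Note only that your intermediate claim that $\ln X_i(t)$ is bounded is not justified, since the support of $\mu$ may accumulate on $\{x_i=0\}$ even when $i\in I_\mu$; but the patch you give afterwards (ruling out $\lambda_i(\mu)>0$ by compactness of $\K$ and $\lambda_i(\mu)<0$ by stationarity of the law of $X_i(t)$) is precisely the correct and standard way to close this, so the final argument stands.
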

\begin{proof}
	The proof is the same as the proof of \cite{HN16}[Lemma 5.1].
\end{proof}

Define the normalized occupation measures $\Pi^{\bx,j}_{t}$ by
\begin{equation}\label{e:occupation}
	\Pi^{\bx,j}_{t}(d\by,i):=\frac{1}{t}\int_0^t\PP_{\bx,j}\{\BX(s)\in d\by,\xi(s)=i\}\,ds.
\end{equation}
and the random normalized occupation measures by
\begin{equation}\label{e:occupation_rand}
	\widetilde\Pi_{t}(d\by,i):=\frac{1}{t}\int_0^t\1_{\{\BX(s)\in d\by,\xi(s)=i\}}\,ds.
\end{equation}

\begin{lm}\label{lm3.3}
	Suppose the following:

	\begin{itemize}
		\item  The sequences $\{(\bx_k,j_k)\}_{k\in N}\subset 
		\K\times\M, (T_k)_{k\in \N}\subset \R_+$ are such that 
		$T_k>1$ for all $k\in \N$ and $\lim_{k\to\infty}T_k=\infty$.
		
		\item The sequence $(\Pi^{\bx_k, j_k}_{T_k})_{k\in \N}$ converges weakly to an invariant probability measure
		$\pi$.
		
	\end{itemize}
	Then for any function $h(\bx,i):\K\times\M\to\R$ that is upper semi-continuous (in $x$ for each fixed $i$), one has
	\begin{equation}\label{lm3.3-e1}
	\lim_{k\to\infty}\int_{\R^3_+}\sum_{j\in\M} h(\bx,j)\Pi^{\bx_k,j_k}_{T_k}(d\bx,j)\leq \int_{\R^3_+}\sum_{j\in\M}h(\bx,j)\pi(d\bx,j).
	\end{equation}
\end{lm}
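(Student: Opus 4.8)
The plan is to read \eqref{lm3.3-e1} (with the left-hand side understood as a limit superior) as an instance of the portmanteau inequality for upper semicontinuous functions, once all the measures involved have been placed on a single compact space. First I would invoke Theorem~\ref{existence_thm}: the compact set $\K$ it produces is forward invariant, so since each $(\bx_k,j_k)\in\K\times\M$ the trajectory remains in $\K$ for all $t\ge 0$, and therefore every occupation measure $\Pi^{\bx_k,j_k}_{T_k}$ is a Borel probability measure carried by $\K\times\M$. As $\K\times\M$ is closed in $\R^3_+\times\M$, the hypothesised weak limit $\pi$ is carried by $\K\times\M$ as well (either from $\limsup_k\Pi^{\bx_k,j_k}_{T_k}(\K\times\M)\le\pi(\K\times\M)$, or because $\pi$, being invariant, lives on the attractor). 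From here on I would work on the compact metric space $E:=\K\times\M$, with $\M$ discrete; for this family of measures, weak convergence on $\R^3_+\times\M$ is the same as weak convergence on $E$.

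Next I would deal with $h$, which is Borel and, being upper semicontinuous on the compact set $E$ (upper semicontinuity in $\bx$ for each fixed $i$ is joint upper semicontinuity since $\M$ is discrete), is bounded above, say by $M_0$, though possibly not below. I would approximate it from above by the Lipschitz functions
\begin{equation*}
g_n(\bx,i):=\sup_{\by\in\K}\bigl(h(\by,i)-n\|\bx-\by\|\bigr),\qquad n\in\N,
\end{equation*}
which are continuous on $E$, satisfy $M_0\ge g_1\ge g_2\ge\cdots\ge h$, and decrease pointwise to $h$ precisely because $h$ is upper semicontinuous.

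The conclusion then comes in two steps. For fixed $n$, using $h\le g_n$ and the weak convergence $\Pi^{\bx_k,j_k}_{T_k}\Rightarrow\pi$ applied to the bounded continuous function $g_n$,
\begin{equation*}
\limsup_{k\to\infty}\int_E h\,d\Pi^{\bx_k,j_k}_{T_k}\ \le\ \lim_{k\to\infty}\int_E g_n\,d\Pi^{\bx_k,j_k}_{T_k}\ =\ \int_E g_n\,d\pi .
\end{equation*}
Letting $n\to\infty$, the functions $M_0-g_n\ge 0$ increase pointwise to $M_0-h$, so the monotone convergence theorem gives $\int_E g_n\,d\pi\downarrow\int_E h\,d\pi$ (the common value lying in $[-\infty,M_0]$); taking the infimum over $n$ of the displayed bound yields \eqref{lm3.3-e1}.

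I do not anticipate a genuine obstacle in this lemma itself — the substantive work in this circle of ideas is establishing convergence of occupation measures to invariant ones, which is a hypothesis here. The only two points that need care are: confirming that the compact set of Theorem~\ref{existence_thm} is forward invariant, so that all the occupation measures sit in one fixed compact set; and resisting the temptation to quote the portmanteau theorem in its ``bounded continuous test function'' form, since $h$ may be unbounded below — this is exactly why the monotone approximation $g_n\downarrow h$ from above is needed.
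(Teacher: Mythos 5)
Your proposal is correct and follows the same route the paper intends: the paper's proof simply says that, since $\K$ is compact, \eqref{lm3.3-e1} follows directly from the Portmanteau theorem, leaving the details to the reader. Your argument (restricting to the forward-invariant compact set $\K\times\M$, noting $h$ is bounded above there, and proving the upper-semicontinuous version of the portmanteau inequality via the decreasing Lipschitz approximations $g_n\downarrow h$ plus monotone convergence) is exactly the standard filling-in of those details.
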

\begin{proof}
Since $\K$ is a compact set, \eqref{lm3.3-e1} can be obtained directly from the Portmanteau theorem.
The details are left to the readers.
\end{proof}
\section{Persistence}
For $\mu\in\cM$ remember that the invasion rate of species $i$ with respect to $\mu\in\M$ is defined by \[ \lambda_i(\mu) = \sum_{j\in\M}\int_{\partial\R^3_+}f_i(\bx,j)\mu(d\bx,j).\]
We assume that
\begin{equation}\label{persistence_assumption}
	\lambda_2(\mu_{13})>0 \text{ and } \lambda_1(\mu_{21})>0.\end{equation}
Using \eqref{e:mu12} we have $$\lambda_{3}(\mu_{12})=\sum_{j\in\M} (e_1(j)x_-+e_2(j)y_-d)\pi_j=\bar e_1\frac{r(1-b_1)}{1-b_1b_2}+\bar e_2\frac{r(1-b_2)}{1-b_1b_2}-d.$$

Since $\bar e_1 r>d; \bar e_2 r>d$ and $\frac{1-b_1}{1-b_1b_2}+\frac{1-b_1}{1-b_1b_2}>1$, (which can be easily checked using $b_1<1, b_2<1$, we have
\begin{equation}\label{lambda3mu12}
	\lambda_{3}(\mu_{12})>0.
	\end{equation}

By the minimax principle, \eqref{persistence_assumption} and \eqref{lambda3mu12} are equivalent to the existence of $p_1, p_2, p_3>0$ satisfying
\begin{equation}\label{c1e1}
\sum_{i=1}^3p_i\lambda_i(\pi)>0, \pi\in\{\mu_1,\mu_2,\mu_{13},\mu_{23},\mu_{12}\}.
\end{equation}
Let $p_0$ be sufficiently large (compared to $p_1,p_2,p_3$) such that
\begin{equation}\label{e2.3}
p_0\min\{\lambda_1(\bdelta^*),\lambda_2(\bdelta^*)\}+\sum_{i=1}^3p_i\lambda_i(\bdelta^*)>0.
\end{equation}

Define
\begin{equation}\label{e2.3}
2\rho^*:=\min\left\{p_0\min\{\lambda_1(\bdelta^*),\lambda_2(\bdelta^*)\}+\sum_{i=1}^3p_i\lambda_i(\bdelta^*), \sum_{i=1}^3p_i\lambda_i(\pi), \pi\in\{\mu_1,\mu_2,\mu_{13},\mu_{23},\mu_{12}\}\right\}>0.
\end{equation}
Let $\K$ be the attractive compact mentioned in Theorem \ref{existence_thm} and $\K^\circ=\R^{3,\circ}_+\cap \K$.


Let $\bp=(p_0,\cdots,p_3)$ satisfy \eqref{e2.3} and
consider the function
\begin{equation}\label{e:V}
V(\bx):=V_{ \bp}(\bx)=\dfrac{1}{(x_1+x_2)^{ p_0}\prod_{i=1}^3 x_i^{ p_i}}.
\end{equation}
It is readily seen that
\begin{equation}\label{e2.4}
\dfrac{x_1f_1(\bx)+x_2f_2(\bx)}{x_1+x_2}
\geq \min\left\{f_1(\bx), f_2(\bx)\right\}.
\end{equation}
Define $\Phi:\{\R_+^{3}\setminus\{(x_1,x_2,x_3)\in\R_+^3~|~x_1+x_2=0\}\}\times \M\mapsto\R$ by
\begin{equation*}
\begin{aligned}
\Phi(\bx,j)=&-p_1f_1(\bx,j)-p_2f_2(\bx,j)-p_3f_3(\bx,j)\\
&-p_0\frac{x_1f_1(\bx,j)+x_2f_2(\bx,j)}{x_1+x_2}.
\end{aligned}
\end{equation*}
Let $\hat\Phi:\R^3_+\times \M\mapsto\R$ be the function
\begin{equation}\label{ehatphi}
\begin{aligned}
\hat\Phi(\bx,j)=&-p_1f_1(\bx,j)-p_2f_2(\bx,j)-p_3f_3(\bx,j)\\
&-p_0\min\left\{f_1(\bx,j), f_2(\bx,j)\right\}.
\end{aligned}
\end{equation}
Define $\widetilde\Phi:\R^3_+\times\M\mapsto\R$ by
$$
\widetilde\Phi(\bx,j)=
\begin{cases}
&\hat \Phi(\bx,j),  \text{ if } ~~~x_1+x_2=0.\\
&\Phi(\bx,j),  \text{ if } ~~~x_1+x_2\ne 0.
\end{cases}
$$
In view of \eqref{e2.4}, for each $j\in \M$, $\widetilde\Phi(\bx,j)$ is an upper semi-continuous function.

\begin{lm}\label{lm3.1}
	Suppose that \eqref{persistence_assumption} holds. Let $\bp$ and $\rho^*$ be as in \eqref{e2.3}.
	There exists a $T>0$ such that for any  $\bx\in\partial\R^3_+\cap\K,  j\in\M$ one has
	\begin{equation}\label{lm3.1-e1}
	\dfrac1T\int_0^T\E_{\bx,j}\widetilde\Phi(\BX(t),\xi(t))dt\leq-\rho^*.
	\end{equation}
	As a corollary,
	there is a $\tilde\delta>0$ such that
	\begin{equation}\label{lm3.1-e2}\dfrac1T\int_0^T\E_{\bx,j}\Phi(\BX(t),\xi(t))dt\leq-\dfrac34\rho^*,
	\end{equation}
	for any $(\bx,j)\in\K\times \M$ satisfying  dist$(\bx,\partial\R^3_+)<\tilde\delta.$
\end{lm}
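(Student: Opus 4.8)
The plan is to deduce \eqref{lm3.1-e1} from a compactness/contradiction argument over the occupation measures, and then obtain \eqref{lm3.1-e2} by a simple continuity-and-uniform-integrability perturbation off the boundary. The key structural fact is that $\widetilde\Phi(\bx,j)$ is upper semi-continuous on $\K\times\M$ (noted right after \eqref{ehatphi}), so Lemma \ref{lm3.3} applies to $h=\widetilde\Phi$.

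For \eqref{lm3.1-e1}, suppose it fails. Then for every $T>0$ there is a point $(\bx_T,j_T)\in\partial\R^3_+\cap\K\times\M$ with $\frac1T\int_0^T\E_{\bx_T,j_T}\widetilde\Phi(\BX(t),\xi(t))\,dt>-\rho^*$. Taking $T=T_k\to\infty$ along a sequence and using that $\partial\R^3_+\cap\K\times\M$ is compact and that the boundary is invariant (Theorem \ref{existence_thm}), we may pass to a subsequence so that $(\bx_k,j_k)\to(\bx_\infty,j_\infty)$ and the occupation measures $\Pi^{\bx_k,j_k}_{T_k}$ converge weakly; any weak limit $\pi$ is an invariant probability measure supported on $\partial\R^3_+\times\M$, hence a convex combination of the ergodic boundary measures in $\cM$. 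By Lemma \ref{lm3.3} with $h=\widetilde\Phi$,
\[
\limsup_{k\to\infty}\frac1{T_k}\int_0^{T_k}\E_{\bx_k,j_k}\widetilde\Phi\,dt=\limsup_{k\to\infty}\int \widetilde\Phi\,d\Pi^{\bx_k,j_k}_{T_k}\leq \int\widetilde\Phi\,d\pi,
\]
so it suffices to show $\int\widetilde\Phi\,d\pi\leq-2\rho^*$ for every invariant $\pi$ on the boundary. Since $\pi$ is a convex combination of ergodic boundary measures, it is enough to check this for each extreme point. For the measures $\mu_1,\mu_2,\mu_{13},\mu_{23},\mu_{12}$ one uses Lemma \ref{lm3.2} and Lemma \ref{l:lyapunov} (the invasion rates vanish in the directions inside the support) together with \eqref{e2.3} to identify $\int\widetilde\Phi\,d\pi=-\big(p_0\min\{\lambda_1,\lambda_2\}+\sum p_i\lambda_i\big)\leq-2\rho^*$. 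The one extra case is an ergodic measure $\bdelta^*$ supported on the boundary with only $X_1$ or only $X_2$ alive (the $Ox_1$ or $Ox_2$ face beyond the equilibrium $(r,0,0)$ type points), which is exactly why the term $p_0\min\{\lambda_1(\bdelta^*),\lambda_2(\bdelta^*)\}+\sum_i p_i\lambda_i(\bdelta^*)$ was separated out in \eqref{e2.3}; on such $\bdelta^*$ the $\hat\Phi$-branch applies and one again gets a bound $\leq-2\rho^*$. This contradicts the strict inequality $>-\rho^*$ and proves \eqref{lm3.1-e1}.

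For the corollary \eqref{lm3.1-e2}, fix the $T$ just produced. The map $(\bx,j)\mapsto\frac1T\int_0^T\E_{\bx,j}\Phi(\BX(t),\xi(t))\,dt$ agrees with the $\widetilde\Phi$-version on $\partial\R^3_+\cap\K$ and, because solutions depend continuously on initial data on the compact $\K$, it is lower semi-continuous-dominated/continuous enough near the boundary; more precisely, using \eqref{e2.4} we have $\Phi\ge\hat\Phi$ off $E_0$, and on a neighborhood of the boundary $\Phi$ is bounded above by a function converging to $\widetilde\Phi$ as $\dist(\bx,\partial\R^3_+)\to0$, uniformly in $j$. Hence there is $\tilde\delta>0$ so that for $\dist(\bx,\partial\R^3_+)<\tilde\delta$ the left side of \eqref{lm3.1-e2} differs from its boundary value by at most $\tfrac14\rho^*$, giving $\le-\tfrac34\rho^*$.

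I expect the main obstacle to be the bookkeeping in the case analysis over extreme points of $\Conv\cM$: one must be sure that the list $\mu_1,\mu_2,\mu_{13},\mu_{23},\mu_{12}$ together with the auxiliary one-species measures $\bdelta^*$ really exhausts (up to irrelevant sets) the ergodic invariant measures on $\partial\R^3_+\times\M$, and that on each of them the identity $\int\widetilde\Phi\,d\pi=-\big(p_0\min\{\lambda_1,\lambda_2\}+\sum_i p_i\lambda_i\big)$ holds with the invasion rates in the ``alive'' directions vanishing (Lemma \ref{l:lyapunov}) — the slight subtlety being the $\min\{f_1,f_2\}$ term, handled via Lemma \ref{lm3.2} on the mixed faces and via the $\hat\Phi$-branch on the one-species faces. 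The continuity/uniform-integrability step for \eqref{lm3.1-e2} is routine given the compactness of $\K$ and the local Lipschitz bounds from Theorem \ref{existence_thm}.
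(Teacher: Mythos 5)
Your proof of \eqref{lm3.1-e1} follows the same route as the paper (tightness of the occupation measures, weak limit is an invariant measure supported on $\partial\R^3_+\times\M$, Lemma \ref{lm3.3} applied to the upper semi-continuous $\widetilde\Phi$, then Lemmas \ref{lm3.2} and \ref{l:lyapunov} together with \eqref{e2.3}), but your bookkeeping of the extreme points contains a misidentification that matters. The measure $\bdelta^*$ in \eqref{e2.3} is the Dirac mass at the origin $\0$ (times $\pi$): on the $x_3$-axis one has $f_3=-d<0$, so the only ergodic measure carried by $\{x_1+x_2=0\}$ is the one at the origin, while on the open axes $Ox_1,Ox_2$ the flow is logistic and contributes no ergodic measures beyond $\mu_1,\mu_2$. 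Your parenthetical description of $\bdelta^*$ as a one-species measure ``beyond the $(r,0,0)$-type points'' is therefore wrong, and it is also inconsistent with your own invocation of the $\hat\Phi$-branch, which applies only where $x_1+x_2=0$. The substance is recoverable --- the origin is exactly the case where $\sum_i p_i\lambda_i$ alone need not be positive, which is why the term $p_0\min\{\lambda_1(\bdelta^*),\lambda_2(\bdelta^*)\}+\sum_i p_i\lambda_i(\bdelta^*)$ is singled out in \eqref{e2.3} --- but the ergodic list you must exhaust is $\bdelta^*\times\pi,\ \mu_1,\ \mu_2,\ \mu_{12},\ \mu_{13},\ \mu_{23}$, with the $\hat\Phi$-branch used only at the origin and Lemma \ref{lm3.2} on the others.

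In the corollary you state the key inequality backwards: \eqref{e2.4} gives $\frac{x_1f_1+x_2f_2}{x_1+x_2}\ge\min\{f_1,f_2\}$ and hence $\Phi\le\hat\Phi$ off $E_0$ (not $\Phi\ge\hat\Phi$), and it is precisely this direction that allows you to dominate the $\Phi$-average by the $\hat\Phi$-average; as written, your perturbation step is internally inconsistent. Moreover, a single ``continuity near the boundary'' argument cannot work, because $\Phi$ is discontinuous at $E_0$ (indeed undefined there), so the map $\bx\mapsto\frac1T\int_0^T\E_{\bx,j}\Phi(\BX(t),\xi(t))\,dt$ has no continuous extension to the $x_3$-axis and ``differs from its boundary value by at most $\tfrac14\rho^*$'' is meaningless near that axis. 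The paper's argument splits a neighborhood of $\partial\R^3_+$ into two regions: on $\{x_1+x_2\le\hat\eps\}$ it uses that $\hat\Phi$ is continuous on all of $\K$, that at points $(0,0,x_3)$ the $\hat\Phi$-average equals the $\widetilde\Phi$-average and is $\le-\rho^*$ by \eqref{lm3.1-e1}, and the Feller property to get the bound $-\tfrac34\rho^*$ for the $\hat\Phi$-average, and only then invokes $\Phi\le\hat\Phi$; on $\{x_1+x_2\ge\hat\eps,\ \dist(\bx,\partial\R^3_+)<\tilde\delta\}$, where $\Phi=\widetilde\Phi$ is continuous, Feller continuity and \eqref{lm3.1-e1} give the bound directly. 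With the inequality corrected and this two-region structure made explicit, your proof coincides with the paper's.
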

\begin{proof}
	We argue by contradiction to obtain \eqref{lm3.1-e1}. Suppose that the conclusion of this lemma is not true.
	Then, we can find $(\bx_k,j_k)\in\partial\R^3_+\times\M, \|\bx_k\|\leq M$
	and $T_k>0$, $\lim_{k\to\infty} T_k=\infty$
	such that
	\begin{equation}\label{e3.9}
	\dfrac1{T_k}\int_0^{T_k}\E_{\bx_k}\widetilde\Phi(\BX(t),\xi(t))dt>-\rho^*\,,\,k\in\N.
	\end{equation}
	Remember that the normalized occupation measures are defined by
	\[
	\Pi^{\bx,j}_{t}(d\by,i):=\frac{1}{t}\int_0^t\PP_{\bx,j}\{\BX(s)\in d\by,\xi(s)=i\}\,ds.
	\]
	It follows from \cite[Lemma 4.1]{HN16} that $\left(\Pi^{\bx_k,j_k}_{T_k}\right)_{k\in \N}$ is tight. As a result
	$\left(\Pi^{\bx_k,j_k}_{T_k}\right)_{k\in \N}$ has a convergent subsequence in the weak$^*$-topology.
	Without loss of generality, we can suppose that $\left(\Pi^{\bx_k,j_k}_{T_k}\right)_{k\in \N}$
	is a convergent sequence in the weak$^*$-topology.
	It can be shown (see Lemma 4.1 from \cite{HN16} or Theorem 9.9 from \cite{EK09}) that its limit is an invariant probability measure $\mu$ of $(\BX,\xi)$. Since $(\bx_k,j_k)\in\partial\R^3_+\times\M$, the support of $\mu$ lies in $\partial \R_+^3\times\M$.
	As a consequence of Lemma \ref{lm3.3}
	$$\lim_{k\to\infty}\dfrac1{T_k}\int_0^{T_k}\E_{\bx_k,j_k}\widetilde\Phi(\BX(t),\xi(t))dt\leq\int_{\R^3_+}\sum_{j\in\M}\widetilde\Phi(\bx,j)\mu(d\bx,j).$$
	Using Lemmas \ref{lm3.2} and \ref{l:lyapunov},  together with equation \eqref{e2.3} we get that
	$$\lim_{k\to\infty}\dfrac1{T_k}\int_0^{T_k}\E_{\bx_k,j_k}\widetilde\Phi(\BX(t),\xi(t))dt\leq -2\rho^*.$$
	This contradicts \eqref{e3.9}, which means \eqref{lm3.1-e1} is proved.
	
	With $\hat\Phi$ defined in \eqref{ehatphi}, we have $\hat\Phi(\bx,j)\geq\Phi(\bx,j)$ for $x_1+x_2\ne0$ and $\hat\Phi(\bx,j)=\widetilde\Phi(\bx,j)$ if $x_1+x_2=0$. As a result of \eqref{e2.3}
	$$
	\hat\Phi(\0)=\widetilde\Phi(\0)=-\sum\left(p_if_i(\0)\right)-p_0\min\left\{f_1(\0), f_2(\0)\right\}\leq-2\rho^*.
	$$
	Thus
	\begin{equation}
	\dfrac1T\int_0^T\E_{(0,0,x_3),j}\hat\Phi(\BX(t),\xi(t))dt=
	\dfrac1T\int_{0}^T\E_{(0,0,x_3),j}
	\widetilde\Phi(\BX(t),\xi(t))dt\leq-\rho^*, (0,0,x_3)\in\K.
	\end{equation}
	Due to the Feller property of $(\BX(t),\xi(t))$ on $\R^3_+\times\M$ and the continunity of $\hat\Phi$ on $\R^3_+$, there is an $\hat\eps>0$ such that
	\begin{equation}\label{lm3.1-e5}
	\dfrac1 T\int_0^T\E_{\bx,j}\hat\Phi(\BX(t),\xi(t))dt\leq -\frac34\rho^*,\text{ if } x_1+x_2\leq\hat\eps, (\bx,j)\in\K\times\M .
	\end{equation}
	Together with $\Phi(\bx,j)\leq\hat\Phi(\bx,j), x_1+x_2\ne0,$ this implies
	$$
	\dfrac1T\int_0^T\E_{\bx,j}\Phi(\BX(t),\xi(t))dt\leq -\frac34\rho^*,\,\, (\bx,j)\in\K\times \M,x_1+x_2\leq\hat\eps.
	$$
	If $x_1+x_2\ne 0$, then $$\PP_{\bx,j}\left\{\widetilde\Phi(\BX(t),\xi(t))=\Phi(\BX(t),\xi(t)), t\geq 0 \right\}=1.$$
	Using the Feller property of $(\BX(t))$ on $\{(x_1,x_2,x_3)\in\R_+^3~|~x_1+x_2\ne 0\}$, equation \eqref{lm3.1-e1} and the continuity of $\Phi(t)=\wtd\Phi(t)$ on $\{(x_1,x_2,x_3)\in\R_+^3~|~x_1+x_2\ne 0\}$ one can see that there exists $\tilde\delta\in(0,\hat\eps)$ for which
	\begin{equation}\label{lm3.1-e6}
	\dfrac1T\int_0^T\E_{\bx,j}\Phi(\BX(t),\xi(t))dt\leq -\frac34\rho^*, (\bx,j)\in\K^\circ\times\M, x_1+x_2\geq\hat\eps,  \text{dist}(\bx,\partial\R^3_+)<\tilde\delta.
	\end{equation}
	Combining \eqref{lm3.1-e5} and \eqref{lm3.1-e6} yields \eqref{lm3.1-e2}.

\end{proof}
\begin{lm}\label{lm2.5}
	Let $Y$ be a random variable, $\theta_0>0$ a constant, and suppose $$\E \exp(\theta_0 Y)+\E \exp(-\theta_0 Y)\leq K_1.$$
	Then the log-Laplace transform
	$\phi(\theta)=\ln\E\exp(\theta Y)$
	is twice differentiable on $\left[0,\frac{\theta_0}2\right)$ and
	$$\dfrac{d\phi}{d\theta}(0)= \E Y,$$
	$$0\leq \dfrac{d^2\phi}{d\theta^2}(\theta)\leq K_2\,, \theta\in\left[0,\frac{\theta_0}2\right)$$
	for some $K_2>0$ depending only on $K_1$.
\end{lm}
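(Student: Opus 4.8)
The plan is to reduce everything to elementary estimates on the moment generating function $M(\theta):=\E\exp(\theta Y)$ and then read off the three claims for $\phi=\ln M$.

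First I would establish the two-sided bound $K_1^{-1}\le M(\theta)\le K_1$ for $|\theta|\le\theta_0$. The upper bound comes from the pointwise inequality $\exp(\theta Y)\le\exp(\theta_0|Y|)\le\exp(\theta_0 Y)+\exp(-\theta_0 Y)$, valid whenever $|\theta|\le\theta_0$, together with the hypothesis. For the lower bound, Cauchy--Schwarz gives $1=\E\!\left[\exp(\tfrac\theta2 Y)\exp(-\tfrac\theta2 Y)\right]\le M(\theta)^{1/2}M(-\theta)^{1/2}$, so $M(\theta)\ge 1/M(-\theta)\ge 1/K_1$. In particular $M$ is finite and strictly positive on $[-\theta_0,\theta_0]$, and $\E|Y|<\infty$ since $|y|\le\theta_0^{-1}\exp(\theta_0|y|)$.

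Next I would produce a domination bound that legitimizes differentiating under the expectation. For $|\theta|\le\theta_0/2$ and $k\in\{1,2\}$, using $\theta Y\le\tfrac{\theta_0}{2}|Y|$ and $\sup_{y\ge0}y^k e^{-(\theta_0/2)y}=(2k/(\theta_0 e))^k=:C_k$, one has
\[
|Y|^k\exp(\theta Y)\le |Y|^k\exp(\tfrac{\theta_0}{2}|Y|)\le C_k\exp(\theta_0|Y|)\le C_k\bigl(\exp(\theta_0 Y)+\exp(-\theta_0 Y)\bigr),
\]
whose expectation is at most $C_kK_1$. Since the right-hand side is integrable and independent of $\theta$, the standard differentiation-under-the-integral-sign theorem (mean value theorem plus dominated convergence) shows that $M\in C^2([0,\theta_0/2))$ with $M'(\theta)=\E[Y\exp(\theta Y)]$, $M''(\theta)=\E[Y^2\exp(\theta Y)]$, and $0\le M''(\theta)\le C_2K_1$.

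Finally, because $M$ is $C^2$ and bounded below by $K_1^{-1}$ on $[0,\theta_0/2)$, the function $\phi=\ln M$ is $C^2$ there, and $\phi'(0)=M'(0)/M(0)=\E Y$ since $M(0)=1$. Differentiating once more, $\phi''=\bigl(M''M-(M')^2\bigr)/M^2$; Cauchy--Schwarz gives $(M'(\theta))^2=\bigl(\E[Ye^{\theta Y/2}\cdot e^{\theta Y/2}]\bigr)^2\le\E[Y^2e^{\theta Y}]\,\E[e^{\theta Y}]=M''(\theta)M(\theta)$, hence $\phi''(\theta)\ge0$, while $\phi''(\theta)\le M''(\theta)/M(\theta)\le C_2K_1\cdot K_1=:K_2$, a constant depending only on $K_1$ (and the fixed $\theta_0$). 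This gives all three assertions. The only point requiring any care is the verification of the hypotheses of the differentiation-under-the-integral theorem, which is precisely the purpose of the domination estimate in the third step; the rest is routine.
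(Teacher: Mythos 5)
Your proof is correct. Note, though, that the paper does not actually prove this lemma: its ``proof'' is a one-line citation to Lemma 3.5 of the Hening--Nguyen reference, so there is no in-paper argument to compare against; what you have written is a complete, self-contained version of the standard argument that the cited lemma relies on. Each step checks out: the two-sided bound $K_1^{-1}\le M(\theta)\le K_1$ on $[-\theta_0,\theta_0]$ (upper bound from $e^{\theta y}\le e^{\theta_0|y|}\le e^{\theta_0 y}+e^{-\theta_0 y}$, lower bound from Cauchy--Schwarz applied to $1=\E[e^{\theta Y/2}e^{-\theta Y/2}]$), the $\theta$-uniform domination $|Y|^k e^{\theta Y}\le C_k\bigl(e^{\theta_0 Y}+e^{-\theta_0 Y}\bigr)$ for $|\theta|\le\theta_0/2$ with $C_k=(2k/(\theta_0 e))^k$, which justifies differentiating under the expectation twice on $[0,\theta_0/2)$, and the conclusion via $\phi'(0)=M'(0)/M(0)=\E Y$, convexity $\phi''\ge 0$ from $(M')^2\le M''M$ (Cauchy--Schwarz again), and $\phi''\le M''/M\le C_2K_1^2=:K_2$. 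The only caveat, which you already flag, is that your $K_2=C_2K_1^2$ depends on $\theta_0$ through $C_2$, whereas the statement says ``depending only on $K_1$''; this is harmless (and unavoidable by your route) since $\theta_0$ is a fixed constant in the lemma and in its application in Proposition \ref{prop2.1}, but it is worth stating $K_2=K_2(K_1,\theta_0)$ explicitly if you want the constant tracked honestly.
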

\begin{proof}
	See Lemma 3.5 in \cite{HN16}.
\end{proof}
\begin{prop}\label{prop2.1}
	Let $V$ be defined by \eqref{e:V} with $\bp$ and $\rho^*$ satisfying \eqref{e2.3} and $T>0$ satisfying the assumptions of Lemma \ref{lm3.1}.
	There are $\theta\in\left(0,1\right)$, $K_\theta>0$, such that for  $\bx\in\K^\circ$,
	\begin{equation}\label{c-LV}\E_\bx V^\theta(\BX(T))\leq \exp(-0.5\theta \rho^*T) V^\theta(\bx)+K_\theta.
		\end{equation}
\end{prop}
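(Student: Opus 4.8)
The plan is to estimate $\E_{\bx}V^\theta(\BX(T))$ by first working with $\theta=1$, controlling the logarithmic growth of $V$ along trajectories, and then using the log-Laplace transform machinery of Lemma~\ref{lm2.5} to pass to a small power $\theta$. Concretely, write $\ln V(\BX(T)) - \ln V(\bx) = \int_0^T \widetilde\Phi(\BX(t),\xi(t))\,dt$ wherever $x_1+x_2\neq 0$ (this is just the chain rule applied to $V_{\bp}$, using that $\frac{d}{dt}\ln X_i = f_i$ and $\frac{d}{dt}\ln(X_1+X_2) = \frac{X_1f_1+X_2f_2}{X_1+X_2}$, which matches the definition of $\Phi$ up to sign), and note $X_1+X_2$ stays positive for $t\ge 0$ when $\bx\in\K^\circ$ by Theorem~\ref{existence_thm}. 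So $V(\BX(T)) = V(\bx)\exp\!\left(\int_0^T \Phi(\BX(t),\xi(t))\,dt\right)$, and hence $V^\theta(\BX(T)) = V^\theta(\bx)\exp\!\left(\theta\int_0^T \Phi\,dt\right)$.

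Next I would split the analysis into two regimes according to whether $\BX(\cdot)$ starts close to $\partial\R^3_+$ or not. If $\mathrm{dist}(\bx,\partial\R^3_+)<\tilde\delta$, Lemma~\ref{lm3.1} (equation~\eqref{lm3.1-e2}) gives $\frac1T\int_0^T\E_{\bx,j}\Phi(\BX(t),\xi(t))\,dt \le -\tfrac34\rho^*$. To upgrade this from a bound on the expected exponent to a bound on the expectation of the exponential, set $Y := \int_0^T \Phi(\BX(t),\xi(t))\,dt$. One checks that $\E_{\bx}\exp(\theta_0 Y) + \E_{\bx}\exp(-\theta_0 Y) \le K_1$ uniformly in $\bx\in\K^\circ$, $j\in\M$, for some $\theta_0>0$: this follows because $|\Phi(\bx,j)|$ is bounded by an affine function of $\|\bx\|$ plus a term of order $\frac{x_3}{x_1+x_2}$, and the relevant exponential moments of $\int_0^T |\Phi|\,dt$ are finite by the standard estimates (e.g.\ the argument in \cite[Lemma 3.4]{HN16}), using the dissipativity from Theorem~\ref{existence_thm}. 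Then Lemma~\ref{lm2.5} applies: with $\phi(\theta) = \ln\E_{\bx}\exp(\theta Y)$ we have $\phi(0)=0$, $\phi'(0) = \E_{\bx}Y \le -\tfrac34\rho^* T$, and $\phi''\le K_2$ on $[0,\theta_0/2)$, so a second-order Taylor expansion gives $\phi(\theta) \le -\tfrac34\rho^* T\,\theta + \tfrac12 K_2\theta^2 \le -\tfrac12\rho^* T\,\theta$ once $\theta$ is small enough (namely $\theta \le \tfrac{\rho^* T}{2K_2}\wedge \tfrac{\theta_0}{2}\wedge 1$). Exponentiating, $\E_{\bx}\exp(\theta Y)\le \exp(-\tfrac12\theta\rho^* T)$, and multiplying by $V^\theta(\bx)$ yields \eqref{c-LV} with $K_\theta = 0$ in this regime.

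For the complementary regime $\mathrm{dist}(\bx,\partial\R^3_+)\ge\tilde\delta$, the point $\bx$ lies in the compact set $\K^\circ_{\tilde\delta} := \{\bx\in\K : \mathrm{dist}(\bx,\partial\R^3_+)\ge\tilde\delta\}$ on which $V$ is bounded above by some constant $M_{\tilde\delta}$. Using the crude bound $\Phi(\bx,j)\le C$ for $\bx\in\K$ (again $\Phi$ is bounded on $\K$ away from $\partial\R^3_+$, but here one only needs that along trajectories $\int_0^T\E_{\bx}\Phi\,dt \le CT$ for a fixed constant, since $\BX$ stays in $\K$), one gets $\E_{\bx}V^\theta(\BX(T)) \le M_{\tilde\delta}^\theta e^{\theta C T} =: K_\theta'$. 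Taking $K_\theta := K_\theta'$ covers both cases simultaneously, since in the near-boundary regime the left side is bounded by $\exp(-\tfrac12\theta\rho^* T)V^\theta(\bx)$ which is $\le \exp(-\tfrac12\theta\rho^* T)V^\theta(\bx) + K_\theta$ trivially. This gives \eqref{c-LV}.

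The main obstacle is the uniform exponential-moment bound $\E_{\bx}\exp(\pm\theta_0 Y)\le K_1$ that feeds Lemma~\ref{lm2.5}: $\Phi$ contains the unbounded term $p_0\frac{x_1f_1+x_2f_2}{x_1+x_2}$, which near $E_0=\{x_1+x_2=0\}$ behaves like $-p_0\min\{f_1,f_2\}$ and involves $x_3$, so one must verify that the occupation-time integral $\int_0^T \frac{X_1f_1(\BX,\xi)+X_2f_2(\BX,\xi)}{X_1+X_2}\,dt$ has uniformly bounded exponential moments. This is handled exactly as in \cite[Lemma 3.4]{HN16}: the integrand is $\le \max\{f_1,f_2\} \le r$ from above (bounded), and from below it is controlled because $\frac{d}{dt}\ln(X_1+X_2)$ integrates to $\ln(X_1(T)+X_2(T)) - \ln(X_1(0)+X_2(0))$, whose negative part has good exponential moments since $X_1+X_2$ cannot decay faster than exponentially with a fixed rate (as $f_1,f_2\ge -c_i(\xi)X_3 - (\text{bounded})$ and $X_3$ is bounded on $\K$). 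Everything else is routine once this moment bound and Lemma~\ref{lm3.1} are in hand.
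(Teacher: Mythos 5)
Your proposal is correct and follows essentially the same route as the paper: the identity $V(\BX(T))=V(\bx)\exp\bigl(\int_0^T\Phi\,dt\bigr)$, the split into $\dist(\bx,\partial\R^3_+)<\tilde\delta$ (where Lemma~\ref{lm3.1} plus Lemma~\ref{lm2.5} and a Taylor expansion of the log-Laplace transform give the contraction factor $\exp(-0.5\theta\rho^*T)$) versus $\dist(\bx,\partial\R^3_+)\geq\tilde\delta$ (where one absorbs everything into $K_\theta$). The only divergence is that the ``main obstacle'' you flag is actually immediate: $\frac{x_1f_1+x_2f_2}{x_1+x_2}$ is a convex combination of $f_1$ and $f_2$, so $\Phi$ is deterministically bounded on $\K\times\M$ and the exponential-moment hypothesis of Lemma~\ref{lm2.5} holds trivially, which is exactly what the paper records in \eqref{e3.5} without appealing to the HN16-type moment estimates.
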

\begin{proof}
	We have
	\begin{equation}\label{e:G}
	\ln V(\BX(T))=\ln V(\BX(0)) +\int_0^T\Phi(\BX(t),\xi(t))dt.
	\end{equation}

Since $\Phi$ is bounded on $\K\times\M$, we can easily have that
\begin{equation}\label{e3.5}
\exp\{-HT\}\leq \frac{V(\BX(T))}{V(\bx)}\leq \exp\{HT\}, \bx\in\K,
\end{equation}
for some nonrandom constant $H$.
Thus,	
 the assumptions of Lemma \ref{lm2.5} hold for the random variable $\int_0^T\Phi(\BX(t),\xi(t))dt=\frac{V(\BX(T))}{V(\bx)}\leq \exp\{HT\}$. Therefore,
	there is $\tilde K_2\geq 0$ such that
	\begin{equation}\label{e:phi''}
	0\leq \dfrac{d^2\tilde\phi_{\bx,j,T}}{d\theta^2}(\theta)\leq \tilde K_2\,\text{ for all }\,\theta\in\left[0,1\right),\, (\bx,j)\in\R^{3,\circ}_+\times\M, \|\bx\|\leq M, T\in [T^*,n^*T^*]
	\end{equation}
	where
	$$\tilde\phi_{\bx,j,T}(\theta)=\ln\E_{\bx,j} \exp\Big(\theta\int_0^T\Phi(\BX(t),\xi(t))dt\Big).$$
	
	An application of Lemma \ref{lm3.1}, and equation \eqref{e:G} yields
	\begin{equation}\label{e:phi'}
	\dfrac{d\tilde\phi_{\bx,j,T}}{d\theta}(0)=\E_{\bx,j} \int_0^T\Phi(\BX(t),\xi(t))dt\leq -\dfrac34\rho^*T
	\end{equation}
	for all $(\bx,j)\in\K^\circ_+$ satisfying  dist$(\bx,\partial\R^3_+)<\tilde\delta.$
	By a Taylor expansion around $\theta=0$, for $\bx\in\K, \dist(\bx,\partial\R^n_+)<\tilde\delta,$ and $\theta\in\left[0,1\right)$ and using \eqref{e:phi''}-\eqref{e:phi'} we have
	$$\tilde\phi_{\bx,j,T}(\theta)=\tilde\phi_{\bx,j,T}(0)+\dfrac{d\tilde\phi_{\bx,j,T}}{d\theta}(0)\theta+ \frac{1}{2} \dfrac{d^2\tilde\phi_{\bx,T}}{d\theta^2}(\theta') (\theta-\theta')^2\leq -\dfrac34\rho^*T\theta+\theta^2\tilde K_2 .$$
	If we choose any $\theta\in\left(0,1\right)$ satisfying
	$\theta<\frac{\rho^*T^*}{4\tilde K_2}$, we obtain that
	\begin{equation}\label{e3.10}
	\tilde\phi_{\bx,j,T}(\theta)\leq -\dfrac12\rho^*T\theta\,\,\text{ for all }\,(\bx,j)\in\R^{3,\circ}\times\M,\|\bx\|\leq M, \dist(\bx,\partial\R^n_+)<\tilde\delta, T\in [T^*,n^*T^*],
	\end{equation}
	which leads to
	\begin{equation}\label{e3.11}
	\dfrac{\E_{\bx,j} V^\theta(\BX(T))}{V^\theta(\bx)}=\exp \tilde\phi_{\bx,j,T}(\theta)\leq\exp(-0.5p^*T\theta).
	\end{equation}
	In view of \eqref{e3.5},
	we have for $(\bx,j)\in\K^\circ\times\M$ satisfying $\dist(\bx,\partial\R^n_+)\geq\tilde\delta$ that
	\begin{equation}\label{e3.12}
	\E_{\bx,j} V^\theta(\BX(T))\leq \exp(\theta TH)\sup\limits_{\bx\in\K, \dist(\bx,\partial\R^n_+)\geq\tilde\delta}\{V(\bx)\}=:K_\theta<\infty.
	\end{equation}
	The proof can be finished by combining \eqref{e3.11} and \eqref{e3.12}.
\end{proof}
\begin{thm}\label{mthm-2}
	Suppose
	$$
	\lambda_2(\mu_{13})=\int_{\R^{13,\circ}_+}\sum_{j\in\M} (r- b_2x_1-c_2(j)x_3)\mu_{13}(dx_1,dx_3,j)>0
	$$
	and
	$$
	\lambda_1(\mu_{23})=\int_{\R^{23,\circ}_+}\sum_{j\in\M} (r- b_1x_2-c_1(j)x_3)\mu_{23}(dx_2,dx_3,j)>0
	$$
	where $\mu_{13}$ is the (unique) invariant measure on $\R^{13,\circ}_+\times\M$
	and
	$\mu_{23}$ is the (unique) invariant measure on $\R^{23,\circ}_+\times\M$. Then
	for each $\eps>0$, there exists $\delta>0$ such that
	$$
	\liminf_{t\to\infty}\PP_\bx\{X_i\geq\delta,i=1,2,3\}\geq 1-\eps.
	$$
	If the strong bracket condition is satisfied in $\K^\circ$ then
	the system is strongly stochastically persistent.

\end{thm}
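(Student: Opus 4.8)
The plan is to treat the inequality \eqref{c-LV} of Proposition \ref{prop2.1} as a geometric (Foster--Lyapunov) drift condition for the time-$T$ skeleton chain $\mathbf{Y}_n:=(\BX(nT),\xi(nT))$ with the proper Lyapunov function $V^\theta$, and to extract from it first the persistence-in-probability bound and then, under the strong bracket condition, unique ergodicity on the interior. Note that the hypotheses of the theorem are precisely \eqref{persistence_assumption}, which together with \eqref{lambda3mu12} produce via the minimax principle weights $p_1,p_2,p_3>0$, and then $p_0,\rho^*>0$, satisfying \eqref{c1e1} and \eqref{e2.3}; hence Lemma \ref{lm3.1} and Proposition \ref{prop2.1} are available.

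First I would iterate \eqref{c-LV}: for $\bx\in\K^\circ$,
\[
\E_\bx V^\theta(\BX(nT))\le e^{-0.5\theta\rho^* nT}V^\theta(\bx)+\frac{K_\theta}{1-e^{-0.5\theta\rho^* T}}=:e^{-0.5\theta\rho^* nT}V^\theta(\bx)+\widehat K_\theta .
\]
On the compact forward-invariant set $\K$ all coordinates are bounded, so $V^\theta(\bx)$ is large precisely when $\min_i x_i$ is small: for every $R$ there is $\delta(R)>0$ with $\delta(R)\downarrow 0$ as $R\uparrow\infty$ and $\{\bx\in\K:V^\theta(\bx)<R\}\subseteq\{x_i\ge\delta(R),\ i=1,2,3\}$. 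Combining this with Markov's inequality gives $\PP_\bx\{X_i(nT)\ge\delta(R),\ i=1,2,3\}\ge 1-(e^{-0.5\theta\rho^* nT}V^\theta(\bx)+\widehat K_\theta)/R$. To pass from the times $nT$ to all $t$, I would use the pathwise bound \eqref{e3.5}: once a solution is inside $\K$ one has $V(\BX(s))\le e^{HT}V(\BX(nT))$ for $s\in[nT,(n+1)T]$, so the estimate upgrades to a bound on $\PP_\bx\{\sup_{s\in[nT,(n+1)T]}V^\theta(\BX(s))\ge R\}$ that is uniform in $n$. Since every interior solution eventually enters $\K^\circ$ and thereafter remains there (Theorem \ref{existence_thm} together with a.s.\ positivity of interior solutions), conditioning on such an entrance time extends the estimate to an arbitrary $\bx\in\R^{3,\circ}_+$; letting $t\to\infty$ and then choosing $R$ large yields, for every $\eps>0$, a $\delta>0$ with $\liminf_{t\to\infty}\PP_\bx\{X_i\ge\delta,\ i=1,2,3\}\ge 1-\eps$, the first assertion.

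For the second assertion, I would first note that $\sup_n\int V^\theta\,d\mu^N_\bx<\infty$, where $\mu^N_\bx:=\frac1N\sum_{n=0}^{N-1}\PP_\bx\{\mathbf{Y}_n\in\cdot\}$, makes $(\mu^N_\bx)_N$ tight in $\R^{3,\circ}_+\times\M$ (as $V^\theta$ is proper and lower semicontinuous there); a Krylov--Bogolyubov argument then produces a $P_T$-invariant probability measure $\nu$ with $\int V^\theta\,d\nu<\infty$, so $\nu(\partial\R^3_+\times\M)=0$, and $\pi^*:=\frac1T\int_0^T\nu P_s\,ds$ is an invariant probability measure of $(\BX(t),\xi(t))$ supported in $\R^{3,\circ}_+\times\M$. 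Now assume the strong bracket condition holds in $\K^\circ$. By \cite[Theorems 4.4--4.6]{benaim2015qualitative} (as recorded before Theorem \ref{t1}) the skeleton $\mathbf{Y}_n$ is irreducible and aperiodic on the interior and every compact subset of $\R^{3,\circ}_+\times\M$ is petite; since $\{V^\theta\le R\}\cap\K$ is compact and \eqref{c-LV} is a geometric drift toward it, the Meyn--Tweedie theory gives that $\mathbf{Y}_n$ has a unique invariant probability measure $\pi^*$ with $\|\PP_{(\bx,k)}\{\mathbf{Y}_n\in\cdot\}-\pi^*\|_{\text{TV}}\to 0$ geometrically. Uniqueness transfers to the flow: any continuous-time invariant measure is $P_T$-invariant, hence equals $\pi^*$, while $\frac1T\int_0^T\pi^*P_s\,ds$ is $P_T$-invariant, hence equals $\pi^*$, so $\pi^*$ is invariant for the flow. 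Finally, writing $t=nT+s$ with $s\in[0,T)$ and using that total variation is non-increasing under Markov kernels, $\|P_\BX(t,\bx,k,\cdot)-\pi^*\|_{\text{TV}}\le\|\PP_{(\bx,k)}\{\mathbf{Y}_n\in\cdot\}-\pi^*\|_{\text{TV}}\to 0$, which is strong stochastic persistence.

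The genuinely hard analysis has already been carried out upstream, in Lemma \ref{lm3.1} (the boundary averaging estimate, obtained by a tightness/contradiction argument) and Proposition \ref{prop2.1} (the Lyapunov inequality). Within the present proof the care points are essentially bookkeeping: the level-set step must correctly match ``$V^\theta$ large on $\K$'' with ``one of the $X_i$ small'' in the presence of the extra factor $(x_1+x_2)^{p_0}$ in $V$; one must check that a single $T$ can be chosen to satisfy simultaneously the hypotheses of Lemma \ref{lm3.1} and Proposition \ref{prop2.1} and to make the consequences of the strong bracket condition (petiteness, irreducibility, aperiodicity of $\mathbf{Y}_n$) available; and the skeleton-to-continuous-time transfer together with the interpolation from the times $nT$ to all $t$, while routine, must be handled carefully. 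I expect this last transfer to be the only place where a subtlety could bite, since everything else is a direct consequence of the drift inequality.
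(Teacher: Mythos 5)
Your proposal is correct and takes essentially the same route as the paper: the published proof likewise treats the drift inequality \eqref{c-LV} together with the pathwise bound \eqref{e3.5} as giving $\limsup_{t\to\infty}\E_\bx V^\theta(\BX(t))\leq K_0$ (outsourcing the skeleton iteration and $nT$-to-$t$ interpolation to \cite[Theorem 3.3]{watts2021dynamics} and \cite[Theorem 2.2]{tuong2019extinction}) and then concludes via Markov's inequality on level sets of $V$, exactly as you do. The only difference is one of explicitness: you carry out the Foster--Lyapunov iteration yourself and supply the Meyn--Tweedie/petite-set argument for the strong stochastic persistence claim, which the paper leaves implicit, relying on the consequences of the strong bracket condition recorded before Theorem \ref{t1}.
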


\begin{proof}
In Proposition \ref{prop2.1}, we have constructed a Lyapunov function $V$ satisfying \eqref{c-LV}. This inequality and \eqref{e3.5} show that
$$
\limsup_{t\to\infty}\E_\bx V^{\theta}(\BX(t))\leq K_0
$$
for a nonrandom $K_0$ independent of $\bx$; see e.g. \cite[Theorem 3.3]{watts2021dynamics} or \cite[Theorem 2.2]{tuong2019extinction}.
As a result, because $\lim_{x_1\wedge x_2\wedge x_3\to0}V(\bx)=0$,
for each $\eps>0$, there exists $\delta>0$ such that
$$
\liminf_{t\to\infty}\PP_\bx\{X_i\geq\delta,i=1,2,3\}\geq 1-\eps.
$$

\end{proof}

\section{Extinction}
Piecewise deterministic Markov processes can be quite degenerate, and one has to do some additional work in order to see which parts of the state space are visited by the process. Let $\phi_t^k(\cdot)$ be the flow associated with
the equation
$$dX_i(t)=X_i(t)f_i(\BX(t),k)dt,i=1,\dots,n$$
for each fixed $k\in\cN$.
That is, $\phi_t^k(\bx)$
is the solution at time $t$
to
$$dX_i(t)=X_i(t)f_i(\BX(t),k)dt,i=1,\dots,n$$
with initial value $\BX(0)=\bx$. Define the orbit
$$\gamma^+(\bx)=\left\{\phi_{t_n}^{k_n}\circ\cdots\circ\phi_{t_1}^{k_1}(\bx): n\in \Z_+, t_l\geq 0, k_l\in\M: l=1,\dots,n\right\}
$$
and for the invariant set $K\subset \R^{3}_+$ let
$$\Gamma(K)=\bigcap_{\bx\in K^\circ}\bar{\gamma^+(\bx)}$$
be, the possibly empty, compact subset which is accessible for the process $(\BX(t), \xi(t))$ from $K$.
\begin{thm}\label{t:extinction} We have the following extinction results:
	\begin{enumerate}
		\item
	If $\lambda_2(\mu_{13})<0$ then for any compact set $\K_{13}\subset\R^{13,\circ}$ any for any $\eps>0$, there exists $\delta>0$ such that for all $(x_1,x_3)\in\K_{13}, 0<x_2<\delta$ we have
	$$
	\PP_{\bx,i}\left\{\lim_{t\to\infty} \frac{X_2(t)}{t}=\lambda_2(\mu_{13})<0 \right\}\geq 1-\eps.
	$$
\item 	If $\lambda_1(\mu_{23})<0$ then for any compact set $\K_{23}\subset\R^{23,\circ}$ and for any $\eps>0$, there exists $\delta>0$ such that for all $(x_2,x_3)\in\K_{23}, 0<x_1<\delta$ we have
	$$
	\PP_{\bx,i}\left\{\lim_{t\to\infty} \frac{X_1(t)}{t}=\lambda_1(\mu_{23})<0 \right\}\geq 1-\eps.
	$$
	\end{enumerate}
	\begin{enumerate}
	\item[(3)] If $\lambda_2(\mu_{13})<0, \lambda_1(\mu_{23})>0$ and $\R^{13,\circ}_+$ is accessible from any $\bx\in\R^{3,\circ}_+$, that is $\R^{13,\circ}_+\cap\Gamma(K) \ne\emptyset$, then
$$
\PP_{\bx,i}\left\{\lim_{t\to\infty} \frac{X_2(t)}{t}=\lambda_2(\mu_{13})<0 \right\}=1.
$$
	\item[(4)]
	If $\lambda_1(\mu_{23})<0, \lambda_2(\mu_{13})>0$ and $\R^{23,\circ}_+$ is accessible from any $\bx\in\R^{3,\circ}_+$ then
	$$
	\PP_{\bx,i}\left\{\lim_{t\to\infty} \frac{X_1(t)}{t}=\lambda_1(\mu_{23})<0 \right\}=1.
	$$
	\item[(5)]
		If $\lambda_1(\mu_{23})<0, \lambda_2(\mu_{13})<0$ and $\R^{13,\circ}$ and $\R^{23,\circ}_+$ are accessible from any $\bx\in\R^{3,\circ}_+$ then
	$$
	\PP_{\bx,i}\left\{\lim_{t\to\infty} \frac{X_1(t)}{t}=\lambda_1(\mu_{23})<0 \right\}+\PP_{\bx,i}\left\{\lim_{t\to\infty} \frac{X_2(t)}{t}=\lambda_2(\mu_{13})<0 \right\}=1.
	$$
\end{enumerate}

\end{thm}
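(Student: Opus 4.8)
The five parts split naturally into the two ``local'' extinction statements (1)--(2) and the three ``global'' ones (3)--(5); the plan is to establish the local statements first and then bootstrap them using accessibility. For part (1) (and, symmetrically, (2)) the key input is the \emph{uniform} exponential ergodicity of the $(X_1,X_3)$--process on the face $\R^{13,\circ}_+$ supplied by Theorem \ref{t1}: because $f_2(\bx,j)=r-b_2x_1-c_2(j)x_3$ is bounded on the attracting compact $\K$ of Theorem \ref{existence_thm} and $\|\PP_{(\bx,j)}[(\BX_t,\xi_t)\in\cdot]-\mu_{13}\|_{TV}\leq ce^{-\alpha t}$, one obtains
$$
\frac1T\,\E_{(x_1,0,x_3),j}\!\int_0^T f_2(\BX(s),\xi(s))\,ds\;\xrightarrow[T\to\infty]{}\;\lambda_2(\mu_{13})<0
$$
uniformly for $(x_1,x_3)$ in the compact set $\K_{13}$. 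Continuity of the flow and of $\xi$ on the finite horizon $[0,T]$, together with dominated convergence, upgrades this to $\frac1T\E_{\bx,i}\int_0^T f_2\,ds\leq\tfrac12\lambda_2(\mu_{13})$ once $T$ is fixed large and $0<x_2<\delta$, $(x_1,x_3)\in\K_{13}$. Since $\ln X_2(T)=\ln x_2+\int_0^T f_2(\BX(s),\xi(s))\,ds$ and the integrand is bounded on $\K$, the log--Laplace estimate of Lemma \ref{lm2.5} gives, for a small $\theta>0$, an exponential supermartingale bound $\E_{\bx,i}X_2(T)^\theta\leq e^{-c_0\theta T}x_2^\theta$ valid on $\{x_2<\delta,(x_1,x_3)\in\K_{13}\}$. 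A maximal--inequality argument (taking the initial $x_2$ small relative to $\delta$, and enlarging $\K_{13}$ slightly so that the perturbed process, staying close to the positively recurrent face process, remains inside it with probability $\geq1-\eps$) then forces $\tfrac1t\ln X_2(t)\to\lambda_2(\mu_{13})$ on an event of probability at least $1-\eps$, which is the assertion of part (1) (interpolation between the times $nT$ uses boundedness of $f_2$ on $\K$).

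For parts (3)--(4) the point is to replace ``$\geq1-\eps$'' by ``$=1$''. Fix $\eps>0$ and let $W_\eps=\{x_2<\delta,(x_1,x_3)\in\K_{13}\}\times\M$ be the region produced by part (1), on which the probability of the (shift--invariant) event $A:=\{X_2(t)\to0\}$ is at least $1-\eps$. By the strong Markov property $\PP_{\bx,i}(A)=\E_{\bx,i}[\,h(\BX(\sigma),\xi(\sigma))\,]$ for every stopping time $\sigma$, where $h(\by,l):=\PP_{\by,l}(A)\geq1-\eps$ on $W_\eps$; so it is enough to show that from every interior initial condition the process enters $W_\eps$ in finite time almost surely, since then choosing $\sigma$ to be that entrance time gives $\PP_{\bx,i}(A)\geq1-\eps$, hence $\PP_{\bx,i}(A)=1$ as $\eps\downarrow0$ (part (4) is symmetric). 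The almost sure entrance into $W_\eps$ is where the remaining hypotheses enter: accessibility of $\R^{13,\circ}_+$ provides a point $\bx^\ast\in\R^{13,\circ}_+\cap\Gamma(K)$, and the PDMP support theorem turns ``$\bx^\ast\in\overline{\gamma^+(\bx)}$ for all interior $\bx$'' into a lower bound, uniform over compacts, on the probability of reaching a prescribed neighborhood of $\bx^\ast$ (contained in $W_\eps$) within a fixed time; meanwhile the conditions $\lambda_1(\mu_{23})>0$, $\lambda_3(\mu_1),\lambda_3(\mu_2)>0$ (Assumption \ref{asp0}(2)) and $\lambda_3(\mu_{12})>0$ (equation \eqref{lambda3mu12}) guarantee, via an auxiliary Lyapunov function built from the $x_1$-- and $x_3$--directions only (of the type constructed in Proposition \ref{prop2.1}), that the process is repelled from $\{x_1=0\}\cup\{x_3=0\}$ and hence cannot avoid returning indefinitely to a region where the reaching bound applies, so a Borel--Cantelli argument finishes the claim.

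Part (5) runs the same machinery with two target regions: when $\lambda_2(\mu_{13})<0$ and $\lambda_1(\mu_{23})<0$ both $\mu_{13}$ and $\mu_{23}$ are transversally attracting, while every other ergodic boundary measure has a strictly positive invasion rate ($\lambda_1=\lambda_2=r>0$ for $\bdelta_0$; $\lambda_3>0$ for $\mu_1,\mu_2,\mu_{12}$ as above, and $\lambda_2(\mu_1),\lambda_1(\mu_2)>0$ since $b_1,b_2<1$), so the reachability argument shows the process almost surely enters $W_\eps^{13}\cup W_\eps^{23}$; the strong Markov property at the entrance time of this union, combined with the local statements and $\eps\downarrow0$, yields $\PP_{\bx,i}(X_1(t)\to0)+\PP_{\bx,i}(X_2(t)\to0)\geq1$. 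These two events are disjoint: if $X_1(t)\to0$ and $X_2(t)\to0$ then $X_3(t)\to0$ as well (the predator has negative per--capita growth once both prey vanish), so eventually $f_1(\BX(t),\xi(t))\to r>0$, contradicting $X_1(t)\to0$. Hence the two probabilities sum to exactly $1$.

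The step I expect to be the main obstacle is the almost sure entrance into the local--extinction region used in parts (3)--(5): for a PDMP this cannot be read off from irreducibility and must be assembled from the accessibility hypothesis, the support theorem, and a Lyapunov/stopping--time argument that simultaneously repels the process from the ``wrong'' faces $\{x_1=0\}$, $\{x_3=0\}$ and lets it approach the ``right'' face $\{x_2=0\}$, with the Borel--Cantelli bookkeeping made uniform over the non--compact region to which the process is eventually confined. Everything else is an adaptation of the occupation--measure and log--Laplace techniques already developed in Sections \ref{existence_thm}'s section and Section 3.
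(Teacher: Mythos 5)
Your treatment of part (1) has a genuine gap at exactly the point where the paper's proof does its real work. Your drift/supermartingale estimate $\E_{\bx,i}X_2(T)^\theta\le e^{-c_0\theta T}x_2^\theta$ is established only while $(X_1,X_3)$ lies in (a slight enlargement of) the compact set $\K_{13}\subset\R^{13,\circ}_+$, and to iterate it over an infinite horizon you assert that the process ``remains inside it with probability $\ge 1-\eps$'' because it stays close to the positively recurrent face process. Exponential ergodicity of the face process gives no such confinement: neither the face process nor the perturbed three--dimensional process is prevented from making excursions toward the edges $\{x_1=0\}$ and $\{x_3=0\}$ of the face, and the finite--horizon continuity comparison you invoke says nothing about all $t\ge0$. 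The paper removes this difficulty by working with $\bar V(\bx)=x_2^{\bar p_2}/(x_1^{\bar p_1}x_3^{\bar p_3})$, the exponents being chosen through \eqref{barp} so that the contraction $\E_{\bx,j}\bar V(\BX(\bar T))\le\bar\kappa\,\bar V(\bx)$ holds for \emph{every} $\bx$ with $x_2$ small (the edge measures $\bdelta^*$ and $\mu_1$ are controlled by $\lambda_1(\bdelta^*)>0$ and $\lambda_3(\mu_1)>0$), so no confinement to a compact subset of the open face is needed. A second gap: your maximal--inequality step can only give $\limsup_{t\to\infty}\frac1t\ln X_2(t)\le-c_0<0$; it does not ``force'' the exact limit $\lambda_2(\mu_{13})$ claimed in the statement. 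The paper obtains the exact rate by a separate occupation--measure argument: on the extinction event every weak limit of $\wtd\Pi_t$ has the form $a_1\bdelta^*\times\pi+a_2\mu_1+a_3\mu_{13}$, and $a_1=a_2=0$ is forced by $\lambda_1(\bdelta^*\times\pi)>0$, $\lambda_3(\mu_1)>0$ together with boundedness of trajectories, whence $\frac1t\ln X_2(t)\to\lambda_2(\mu_{13})$ on that event.

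For parts (3)--(5) your route is genuinely different from the paper's, and its crux is left unproved. You reduce everything to almost sure entrance into the basin $W_\eps$, to be obtained from accessibility, a support theorem, a Lyapunov function repelling from $\{x_1=0\}\cup\{x_3=0\}$, and Borel--Cantelli; but the hardest bookkeeping (almost sure returns, at stopping times, to a fixed compact set away from the wrong faces on the event of never entering $W_\eps$, with a uniform success probability feeding a conditional Borel--Cantelli) is precisely what you flag as ``the main obstacle'' and do not carry out, and the whole construction also rests on the incomplete part (1). The paper's argument is lighter: accessibility together with part (1) and \cite[Lemma 3.1]{benaim2015qualitative} yields only \emph{positive} probability of convergence to the boundary, which already rules out an invariant probability measure on $\R^{3,\circ}_+\times\M$; then every weak limit of $\wtd\Pi_t$ is a boundary invariant measure, and the standard contradiction argument (positive invasion rates of all other boundary ergodic measures versus boundedness of solutions) identifies $\mu_{13}$ (resp.\ $\mu_{23}$, or one of the two in part (5)) as the only possible limit, giving the almost sure conclusions directly. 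Your disjointness argument in part (5) (if both prey vanish then the predator vanishes, so $f_1\to r>0$, contradicting $X_1\to0$) is correct, but as written the proposal does not amount to a proof of (1)--(5).
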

\begin{proof}
	Let $\bar p_1,\bar p_2, \bar p_3>0$ such that
	\begin{equation}\label{barp}
		\bar p_1\lambda_1(\mu)-\bar p_2\lambda_2(\mu)+\bar p_3\lambda_3(\mu)>0,
	\text{for any }\mu\in\{\bdelta^*,\mu_1,\mu_{13}\}.
	\end{equation}
	
	Define $$\bar V(\bx)=\frac{x_2^{\bar p_2}}{x_1^{\bar p_1}x_3^{\bar p_2}}.$$
	As in the proof of Lemma \ref{lm3.1},  we can show that, for any $\bx\in\R^{13,+}$ and $\|\bx\|\leq M$, we have
	$$
	\frac1{\bar T}\int_0^{\bar T}\E_{\bx,j}\left(-\bar p_1 f_1(\BX(t),\xi(t)-\bar p_3 f_3(\BX(t),\xi(t)+\bar p_2 f_2(\BX(t),\xi(t)\right)dt<-\bar\rho<0
	$$
	for some $\bar\rho>0$, $\bar T>0$.
	Next, we can show as in Proposition \ref{prop2.1} that
	\begin{equation}\label{thm4.1-e2}
	\E_{\bx,j} \bar V(\BX(\bar T))\leq \bar\kappa \bar V( \bx), \text{ for all } \bx\in\R^{3,\circ}_+: x_2<\bar\delta, \|\bx\|\leq M
	\end{equation}
for some $\bar\kappa\in(0,1),\bar\delta>0$.
	Define $$U(\bx)=\bar V(\bx)\wedge \frac{\bar\delta^{\bar p_2}}{M^{\bar p_1+\bar p_2}}$$
	and
	$$\eta:=\inf\{k\geq0: \bar V(X(k\bar T))>\bar v_M \bar\theta^{k-1}\}.$$
	
	From \eqref{thm4.1-e2}, we have
\begin{equation}\label{thm4.1-e3}
	\PP_{\bx,j}\{\bar V(X(\bar T))>\varsigma\}\leq \frac{\E_{\bx,j} \bar V(\BX(\bar T))}{\varsigma}\leq
	\frac{\bar \kappa}{\varsigma} \bar V(\bx).
\end{equation}
	In particular, we have
	$$
	\PP_{\bx,j}\{\eta=1\}\leq
	\frac{\bar \kappa}{\bar v_M\bar\theta} \bar V(\bx).
	$$
	Similarly, using the Markov property of $(\BX(t),\xi(t))$ and \eqref{thm4.1-e3}, we have
	$$
	\begin{aligned}
	\PP_{\bx,j}\{\eta= 2\}=&\E_{\bx,j}\left[\1_{\{\eta>1\}}\PP_{\BX(\bar T), \xi(\bar T)}\{\eta=2\}\right]\\
	\leq&\E_{\bx,j}\left[\1_{\{\eta>1\}}
	\frac{\bar \kappa}{\bar v_M \bar\theta^2}\bar V(\BX(\bar T))\right]\\
	\leq & \frac{\bar\kappa^2}{\bar v_M\bar\theta^2} \bar V(\bx).
	\end{aligned}
	$$
	Continuing this way, we can show that
	$$\PP_{\bx,j}\{\eta<\infty\}=\sum_{k=1}^\infty \PP_{\bx,j}\{\eta=k\}\leq  \frac{\bar V(\bx) }{\bar v_M}\sum_{k=1}^\infty \frac{\bar\kappa^k}{\bar\theta^k}\leq \frac{\bar V(\bx) }{\bar v_M}\frac{\bar\theta}{\bar\theta-\bar\kappa}.$$
	
	This easily implies that if $\bar V(\bx)$ is sufficiently small then
	\begin{equation}\label{thm4.1-e4}
	\PP_{\bx,i}\left\{\lim_{k\to\infty}X_3(k\bar T)=0\right\}>1-\eps
	\end{equation}
	On the other hand, since $\BX(t)$ lives in a compact space, and the coefficients of \eqref{2prey_pdmp} are locally Lipschitz, there exists $\bar K>0$ such that
	$X_3(t)\leq \bar K X_3(k\bar T)$ for any $t\in (k\bar T,(k+1)\bar T)$.
	As a result,
		$$
	\PP_{\bx,i}\left\{\lim_{k\to\infty}X_3(k\bar T)=0\right\}>1-\eps.
	$$
	
	Finally, to obtain the exact convergence rate, we use the fact that any weak limit of the random occupation measure $\wdt\Pi_{t_k}:=\frac1t\int_0^t\1_{\{(\BX(s),\xi(s))\in\cdot\}}ds$ must be almost surely an invariant measure of $(\BX(t),\xi(t))$. If $X_3(t)$ converges to 0 then the weak limit must be an invariant measure on $\R_+^{13,\circ}\times \M$. Suppose with a positive probability, there exists a random sequence $\{t_k\}$ such that the limit of $\wtd \Pi_{t_k}$ is of the form $a_1\bdelta^*\times\pi+a_2\mu_1+a_3\mu_{13}$ with $a_1>0$ or $a_2>0$, then we show this leads to a contradiction as follows.
		We have from the weak convergence that
		$$
		\lim_{k\to\infty}\frac{\ln X_1(t_k)}{t_k}=\lim_{k\to\infty} \lambda_1(\wdt\Pi_{t_k})=\lambda_1(a_1\bdelta^*\times\pi+a_2\mu_1+a_3\mu_{13})=a_1\lambda_1(\bdelta^*\times\pi)
		$$
		because $\lambda_1(\mu_1)=0,\lambda_1(\mu_{13})=0$.
	Since $\lambda_1(\bdelta^*\times\pi)>0$ we must have $a_1=0$ otherwise
	$	\lim_{k\to\infty}\frac{\ln X_1(t_k)}{t_k}=\infty$, which contradicts the fact that the solution is bounded.
	
	Once we proved that $a_1=0$, we have
		$$
	\lim_{k\to\infty}\frac{\ln X_3(t_k)}{t_k}=\lim_{k\to\infty} \lambda_3(\wdt\Pi_{t_k})=\lambda_3(a_2\mu_1+a_3\mu_{13})=a_2\lambda_3(\mu_1)
	$$
	since $\lambda_3(\mu_{13})=0$.
	The fact that $X_3(t)$ is bounded implies that $a_2=0$ as well.
	
	As a result, we proved that the only weak limit of $\wtd\Pi_t$, if $X_3(t)$ converges to $0$, is $\mu_{13}$.
	Because of this uniqueness, we have
		$$
	\lim_{t\to\infty}\frac{\ln X_2(t)}{t}=\lim_{t\to\infty} \lambda_2(\wdt\Pi_{t})=\lambda_2(\mu_{13})
	$$
	for almost all trajectories satisfying $\lim_{t\to\infty}X_3(t)=0$.
	
	Combining this conclusion and \eqref{thm4.1-e4} completes our proof for part (1). Part (2) is similar.
	
	For parts (3), (4) and (5), we combine the result from part (1), the accessibility of the boundary and \cite[Lemma 3.1]{benaim2015qualitative} to obtain that
	$$
	\PP_{(\bx_0,i)}\left(\lim_{t\to\infty}\dist(\BX(t),\partial\R^{3}_+)=0\right)>0.
	$$
	This implies that there is no invariant measure on $\R^{3,\circ}_+\times\M$.
	As a result, any weak-limit of $\wdt\Pi_{t}(\cdot):=\frac1t\int_0^t \1_{\{(\BX(s),\xi(s))\in\cdot\}}ds$
	is an invariant measure on the boundary $\partial\R^{3,\circ}_+\times\M$.
	This can be used in conjunction with a standard contradiction argument \citep[Lemma 5.8]{HN16} to obtain the claims in parts (3), (4), and (5).
\end{proof}

\section{Examples}
In this section we showcase our theoretical results in two specific illuminating examples. For the deterministic system, without switching, corresponding to fixing $\xi(t)=j\in\M, t\geq 0$, if $b_1,b_2<1$ coexistence for the prey ecosystem $(X_1,X_2)$ is impossible in the absence of the predator. However, if $e_1(j)r>d$ and $e_2(j)r>d$ and $$\lambda_2(\delta_{13},j)=r-\frac{d}{e_1(j)}b_2-\left(r-\frac{d}{e_1(j)}\right)\frac{c_2(j)}{c_1(j)}>0,$$
$$\lambda_1(\delta_{23},j)=r-\frac{d}{e_2(j)}b_1-\left(r-\frac{d}{e_2(j)}\right)\frac{c_1(j)}{c_2(j)}>0,$$ where $(\delta_{13},j)$ is the point mass at the unique equilibrium of $(X_1, X_3)$ in environment $j$ on $R_+^{13,\circ}$, then the three-species ecosystem $(X_1, X_2 ,X_3)$ exhibits coexistence.

We will study how the random switching can change the longterm behavior of such ecosystems.

\vspace{1em}

\begin{exam}\label{ex5.1}

Consider the parameters

\[
\begin{cases}
r = 1,\tab d = 0.1\\
b_1 = 0.55,\tab b_2=  0.95\\
c_1(1) = 0.15,\tab c_1(2) = 0.4\\
c_2(1) = 0.178,\tab c_1(2) = 0.45\\
e_1(1) = 0.6,\tab e_1(2)  =0.85\\
e_2(1) = 0.45,\tab e_2(2) = 0.15.\\
\end{cases}
\]
Then
\[
\begin{cases}
\lambda_2(\delta_{13},1)\approx-0.0667, \\
\lambda_2(\delta_{13},2)\approx-0.05, \\
\lambda_2(\delta_{23},1) \approx0.185,\\
\lambda_2(\delta_{23},2) \approx 0.3111,\\
\end{cases}
\]
and
\[
\begin{cases}
r-\frac{d}{\bar{e_1}}b_2-(r-\frac{d}{\bar{e_1}})\frac{\bar{c_2}}{\bar{c_1}}\approx0.0022,\\
r-\frac{d}{\bar{e_2}}b_1-(r-\frac{d}{\bar{e_2}})\frac{\bar{c_1}}{\bar{c_2}}\approx0.1742,
\end{cases}
\]
where we set $\bar g=(g(1)+g(2))/2$ for $g=c_1,c_2,e_1,e_2$.
When the switching between the two environments is fast with equal rates $1\to2$ and $2\to1$, standard averaging arguments show that
$\lambda_2(\mu_{13})\approx r-\frac{d}{\bar{e_1}}b_2-(r-\frac{d}{\bar{e_1}})\frac{\bar{c_2}}{\bar{c_1}}$
and $\lambda_1(\mu_{23})\approx r-\frac{d}{\bar{e_2}}b_1-(r-\frac{d}{\bar{e_2}})\frac{\bar{c_1}}{\bar{c_2}}$.

As a result, the equilibrium point on the boundary $\R^{13,\circ}_+$ is asymptotically stable for both deterministic systems corresponding to state 1 and state 2. This shows that in the deterministic systems prey 2 goes extinct. However, with switching we have $\lambda_2(\mu_{13})>0$ and $\lambda_1(\mu_{23})>0$. By Theorem \ref{mthm-2} the three species coexist and converge to the unique invariant measure $\pi$ on $\R^{3,\circ}_+$ (see Figure \ref{fig:fig3}).
\end{exam}

\begin{figure}
\begin{subfigure}{0.48\textwidth}
    \includegraphics[width=0.95\textwidth]{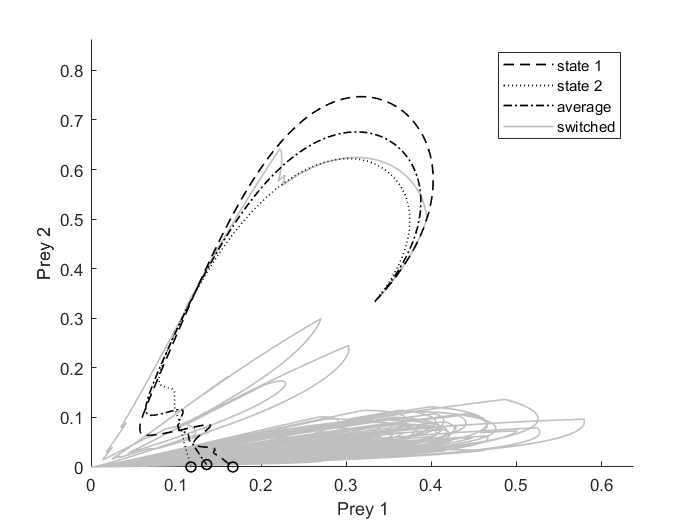}
    \label{fig:fig0}
\end{subfigure}\tab \begin{subfigure}{0.48\textwidth}
    \includegraphics[width=0.95\textwidth]{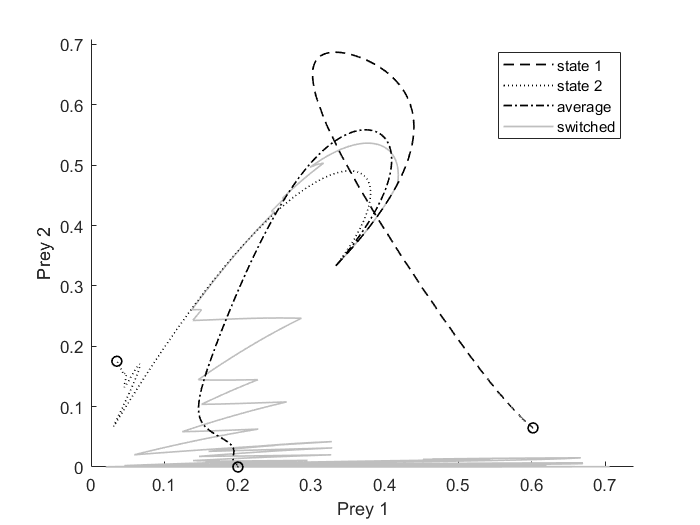}
   \label{fig:fig1}
\end{subfigure}
    \caption{Trajectories in prey 1 - prey 2 phase space. All simulations in a given panel have the same initial conditions. Small circles denote the fixed points for the various vector fields. Left panel: (Example \ref{ex5.1}) In each fixed environmental state prey 2 goes extinct. Switching makes all three species coexist. Right panel: (Example \ref{ex5.2}) In each fixed environmental state the three species coexist. Prey 2 goes extinct in the switched system.}
\label{fig:fig3} \end{figure}

\begin{figure}
\begin{subfigure}{0.45\textwidth}
    \includegraphics[width=0.95\textwidth]{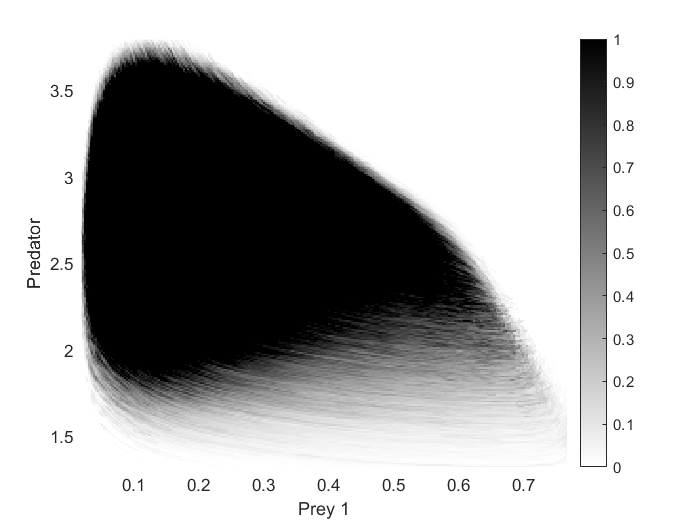}
    \caption{State 1}
    \label{fig:fig23}
\end{subfigure}\tab \begin{subfigure}{0.45\textwidth}
    \includegraphics[width=0.95\textwidth]{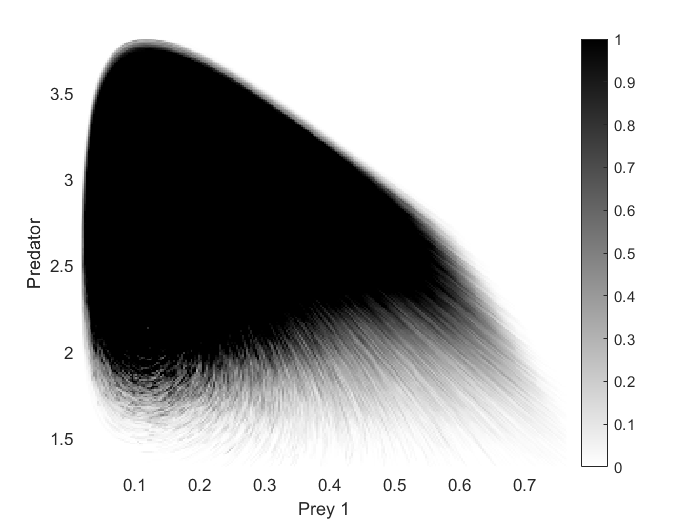}
   \caption{State 2}
   \label{fig:fig21}
\end{subfigure}
    \caption{(Example \ref{ex5.2}) The joint density of $X_1=\text{Prey 1}$ and $X_3=\text{Predator}$ in state 1 and state 2 was simulated 100 times on the time interval $[0,10000]$ for a solution $(X_1,X_2,X_3)$ initial values $(2/3,2/3,3/2).$ The occupation measure for the switched system converges exponentially fast to the absolutely continuous invariant measure on $\R^{13,\circ}_+$. }
\label{fig:fig2} \end{figure}

\begin{exam}\label{ex5.2}

Consider the parameters

\[
\begin{cases}
r = 1,\tab d = 0.1\\
b_1 = 0.9,\tab b_2=  0.5\\
c_1(1) = 0.15,\tab c_1(2) = 0.4\\
c_2(1) = 0.28,\tab c_1(2) = 0.4\\
e_1(1) = 0.15,\tab e_1(2)  =0.85\\
e_2(1) = 0.15,\tab e_2(2) = 0.4.\\
\end{cases}
\]

Then
\[
\begin{cases}
\lambda_1(\delta_{13},1)\approx0.1333, \\
\lambda_1(\delta_{13},2)\approx0.0667, \\
\lambda_1(\delta_{23},1) \approx0.6643,\\
\lambda_1(\delta_{23},2) \approx 0.0333,\\
\end{cases}
\]
and
\[
\begin{cases}
r-\frac{d}{\bar{e_1}}b_2-(r-\frac{d}{\bar{e_1}})\frac{\bar{c_2}}{\bar{c_1}}\approx-0.1114,\\
r-\frac{d}{\bar{e_2}}b_1-(r-\frac{d}{\bar{e_2}})\frac{\bar{c_1}}{\bar{c_2}}\approx 0.2483.
\end{cases}
\]

This shows that the equilibrium point in the interior $\R^{3,\circ}_+$ is asymptotically stable for both deterministic systems corresponding to state 1 and state 2. The three species coexist in both environments, if there is no randomness.
However, when the switching is fast, one has $\lambda_2(\mu_{13})\approx r-\frac{d}{\bar{e_1}}b_2-(r-\frac{d}{\bar{e_1}})\frac{\bar{c_2}}{\bar{c_1}}<0$ and $\lambda_1(\mu_{23})\approx r-\frac{d}{\bar{e_2}}b_1-(r-\frac{d}{\bar{e_2}})\frac{\bar{c_1}}{\bar{c_2}}>0$. Using Theorem \ref{t:extinction} we see that in the random system, prey 1 and the predator persist, while prey 2 can go extinct with a large probability when it starts at a small initial density (see Figure \ref{fig:fig3} and Figure \ref{fig:fig2}).
\end{exam}

%
%
%
%
%
%
\textbf{Acknowledgments:} The authors acknowledge
support from the NSF through the grants DMS-1853463 for
Alexandru Hening and DMS-1853467 for Dang Nguyen.
\bibliographystyle{agsm}
\bibliography{LV}
\end{document}